\numberwithin{equation}{section}
\newtheorem{theorem}{Theorem}[section]
\newtheorem{lemma}[theorem]{Lemma}
\newtheorem{corollary}[theorem]{Corollary}
\newtheorem{proposition}[theorem]{Proposition}
\newcommand{\C}{{\mathbb{C}}}
\newcommand{\ra}{\rightarrow}
\newcommand{\id}{{\iota}}
\newcommand{\om}{{\omega}}
\newcommand{\tp}{{\widehat{\otimes}}}
\newcommand{\vtp}{{\,\overline{\otimes}\,}}
\newcommand{\botimes}{\vtp}
\newcommand{\h}{{\mathcal H}}
\newcommand{\fee}{{\varphi}}
\newcommand{\LL}{{\mathcal{L}^{\infty}(\G)}}
\newcommand{\LO}{{\mathcal{L}^{1}(\G)}}
\newcommand{\LT}{{\mathcal{L}^{2}(\G)}}
\newcommand{\MG}{{\mathcal{M}(\G)}}
\newcommand{\CG}{{\mathcal{C}(\G)}}
\newcommand{\PG}{{\mathcal{P}(\G)}}
\newcommand{\CZ}{{\mathcal{C}_0(\G)}}
\newcommand{\G}{\mathbb G}
\newcommand{\g}{\mathbb G}
\def\proclaim #1. #2\par{\medbreak
\noindent{\bf#1.\enspace}{\sl#2}\par\medbreak}
\title
{Harmonic Operators of Ergodic Quantum Group Actions}
\author[M. Amini, M. Kalantar \and M. S. M. Moakhar]{Massoud Amini \and Mehrdad Kalantar \and Mohammad S. M. Moakhar}
\address{Department of Mathematics,
        Tarbiat Modares University, Tehran 14115-134, Iran}
\email{mamini@modares.ac.ir}
\address{ School of Mathematics and Statistics,
        Carleton University, Ottawa, Ontario, Canada K1S 5B6}
\email{mkalanta@math.carleton.ca}
\address{Department of Mathematics,
        Tarbiat Modares University, Tehran 14115-134, Iran}
\email{m.mojahedi@modares.ac.ir}
\subjclass[2010]{Primary 46L89, 46L55; Secondary 46L07, 22D25.}
\begin{document}

\begin{abstract}
In this paper we study the harmonic elements of (convolution)
Markov maps associated to (ergodic) actions of locally compact quantum groups on ($\sigma$-finite) von Neumann algebras.
We give several equivalent conditions under which
the harmonic elements are trivial.
\end{abstract}

\maketitle

\section{Introduction and Preliminaries}\label{sect1}

The noncommutative Poisson boundary, i.e., the space of fixed points of
normal unital completely positive maps on von Neumann algebras,
has been extensively studied in the last few decades, both in general
and concrete settings \cite{Choi-Eff}, \cite{I}, \cite{Arv04}, \cite{II}, \cite{Arv07}.
These boundaries are of particular interest for the class of Markov maps associated to (homogeneous)
Markov processes on algebraic structures, like the convolution maps of probability
measures on locally compact (quantum) groups.

In \cite{I} Izumi introduced noncommutative random walks on discrete quantum groups
and studied the associated Poisson boundaries.
In this paper we consider a more general setup. The Markov operators whose
Poisson boundaries are investigated here arise from the convolution
action of quantum probability measures on locally compact quantum groups
on von Neumann algebras.

To further clarify the setup and motivate the discussion, let us consider the classical case:
let $G$ be a locally compact group, acting measurably on a $\sigma$-finite probability space $(X, \nu)$.
When $\nu$ is quasi-invariant, $G$ acts canonically on $L^\infty(X, \nu)$, and
for a probability measure $\mu$ on $G$ one may define
a convolution map on $L^\infty(X, \nu)$ by
\[
\mu \star h (x) \,:=\, \int_X\, h(gx)\, d\mu(g) \hspace{1cm} h\in L^\infty(X, \nu)\,,\ \ x\in X\,.
\]
This convolution map defines a Markov operator, whence a (time independent) Markov chain on $X$
with transition probabilities
\[
p(x, A) \,=\, \mu\left(\{\,g\in G : gx \in A\,\}\right) \hspace{1cm} x\in X \,,\, A \subseteq X\,.
\]
A function $h\in L^\infty(X, \nu)$ is called $\mu$-harmonic if it is fixed under convolution by $\mu$, i.e.,
\[
\int_X\, h(gx)\, d\mu(g) \,=\, h(x) \hspace{1cm} \text{for} \,\,\, \nu-a.e. \,\,\, x\in X\,.
\]
The space of all $\mu$-harmonic functions is isomorphic, through the Poisson transformation, to the
space of all essentially bounded measurable functions on the Poisson boundary of the associated Markov chain on $X$ \cite{Kai92}.

In this paper we investigate harmonic elements of the convolution maps associated to quantum group actions. 
Convolution Markov operators on $G$-spaces are studied in the classical setting (cf. \cite{Kai-Woe}, \cite{FR}, \cite{GR}),
but some of our results here on harmonic functions are new even for group actions.
A more detailed study of the classical convolution Markov chains on transformation groups
is done in \cite{K-Moj}.

\par

Let us recall basic definitions and set the terminology used in this paper.
For more details on locally compact quantum groups and their actions we refer the reader to \cite{KV} and \cite{V}.

A {\it locally compact quantum group} $\G$ is a
quadruple $(\LL, \Gamma, \varphi, \psi)$, where $\LL$ is a
von Neumann algebra,
$\Gamma: \LL\to \LL \vtp \LL$
is a co-associative co-multiplication,
and $\varphi$ and  $\psi$ are the (normal faithful semi-finite) left and right
Haar weights on $\LL$, respectively.
The corresponding GNS Hilbert spaces
$L^2(\G,\varphi)$ and $L^2(\G,\psi)$ are isomorphic and are denoted by the same notation $\LT$.
The (left) fundamental unitary $W$ of $\G$ is a unitary
operator on $\LT \otimes \LT$, satisfying the pentagonal equation that implements
the co-multiplication $\Gamma$.

The \emph{reduced quantum group $C^*$-algebra}
$\CZ = \overline{\{\,(\id\otimes\om)\,W \,:\, \om\,\in\, B(H)_*\,\}}^{\|\cdot\|}$
is a weak$^*$ dense $C^*$-subalgebra of $\LL$.
Let $\LO$ be the predual of $\LL$. The pre-adjoint of
$\Gamma$ induces  an associative completely contractive multiplication
\begin{equation}\label {F.mul}
\star \, :\,  f \otimes g \,\in\,
\LO\,\tp\, \LO\, \to \,f \star g\, =\, (f \otimes g)\, \Gamma\, \in\, \LO
\end{equation}
on $\LO$. Also for the operator dual $\MG$ of $C_{0}(\G)$, there exists  a completely contractive multiplication on $\MG$ given by
the convolution
\[
\star \,:\, \mu\otimes \nu\, \in\, \MG\,\tp \, \MG \,\mapsto\, \mu \star \nu
\,:=\, \mu\, (\id\otimes \nu)\,\Gamma \,=\, \nu \,(\mu \otimes \id)\,\Gamma
\,\in \,\MG
\]
such that  $\MG$ contains $\LO$ as a norm closed two-sided ideal. In particular, every $\mu\in\MG$ induces a normal completely bounded map $\Phi^\mu$
on $\LL$. When $\mu$ is a state, $\Phi^\mu$ is a \emph{Markov map}, i.e., it is also unital and completely positive.
%Since the multiplication $\star$ is a complete quotient map from
%$\LO\hat \otimes \LO$ onto $\LO$, we get
%\begin{equation}
%\label {F.L2} \LO = \langle  \LO \star \LO \rangle  =
%\overline{\mbox{span}\{f\star g: f, g\in \LO\}}^{\|\cdot\|}.
%\end{equation}

%with the comultiplication
%\[
%\Gamma :  C_{0}(\G)\to M(C_{0}(\G)\otimes
%C_{0}(\G))
%\]
%given by the  restriction of the comultiplication of $\LL$ to $C_{0}(\G)$.
%Here,  $M(C_{0}(\G)\otimes C_{0}(\G))$  denotes the multiplier $C^*$-algebra of
%the minimal $C^*$-algebra tensor product $C_{0}(\G)\otimes C_{0}(\G)$.
%For convenience, we often use $C(\G)$ for $M(C_{0}(\G))$.

%(for details see \cite {BT}, \cite {HNR1} and \cite {HNR}).

For a von Neumann algebra $N$, a (left) action $\alpha : \G\curvearrowright N$
of $\G$ on $N$ is an injective $*$-homomorphism $\alpha: N \ra \LL\vtp N$ such that
\[
(\Gamma\otimes\id)\,\alpha\,=\,(\id\otimes\alpha)\,\alpha\,.
\]
The action $\alpha$ is called {\it ergodic} if the fixed point algebra is trivial, that is,
\[
N^\alpha\,:=\,\{\,x\,\in\,N\,:\,\alpha(x)\,=\,1\otimes x\,\} \,=\, \C 1\,.
\]

 \section{Convolution operators on $\G$-spaces}

For $\mu\in \MG$ by \cite[Theorem 2.1]{K-IJM2} one obtains a (unique) normal completely bounded map
$\Phi^\mu_\alpha$ on $N$ such that
\begin{equation}\label{a1}
\alpha\,\Phi_\alpha^\mu\, =\, (\Phi^\mu\otimes\iota)\,\alpha\,.
\end{equation}
When $\mu \in \LO$, we have $\Phi_\alpha^\mu\,=\,(f\otimes\id)\,\alpha$.

We denote by $\PG$ the set of all states $\mu$ in $\MG$.
For $\mu\in\PG$, the corresponding operator $\Phi_\alpha^\mu$
is a Markov map on $N$.
We denote by $\mathcal{H}^\mu_\alpha$ the space of $\mu$-harmonic elements in $N$,
i.e., the space of fixed points of the Markov operator $\Phi_\alpha^\mu$.
This is a weak$^*$ closed operator system in $N$,
and if we choose a free ultrafilter $\mathcal{U}$ on $\mathbb N$, we obtain an
unital completely positive idempotent map $E_\alpha^\mu(x)$ on $N$,
\begin{equation}\label{new1}
E_\alpha^\mu(x) \,=\, \text{weak*}-\lim_{\mathcal{U}}\,{\frac{1}{n}\,\sum_{k=1}^n\, {(\Phi_\alpha^\mu)^k (x)}}\,.
\end{equation}
This allows us to define a von Neumann algebra structure on the operator system $\mathcal{H}^\mu_\alpha$
via the \emph{Choi--Effros} product $x\circ y \,:=\, E_\alpha^\mu(xy)$ (cf. \cite{Choi-Eff}).

%\begin{theorem}\label{thm}\emph{[Choi-Effros 1977]}
%Let $N$ be a von Neumann algebra and $X\subset M$ be a weak$^*$ closed operator system. If there exists a completely positive
%projection $E: M \rightarrow X$, $X$ is a von Neumann algebra with respsct to the product $x\circ y = E(xy)$.
%\end{theorem}

%Then with the \emph{Choi--Effros} product this weak$^*$ closed operator system
%becomes a von Neumann algebra, which we still denote by $\mathcal{H}^{\mu,N}$.
%It is sufficient to define a projection $E_\mu:N\rightarrow \mathcal{H}^{\mu,N}$ by taking weak$^*$ limit
%\begin{equation}\label{new1}
%E_\mu(x)= \lim_{\mathcal{U}}{\frac{1}{n}\sum_1^n {\phi_\mu^k(x)}}
%\end{equation}
%with respsct to a free ultrafilter $\mathcal{U}$ on $N$. Then the Choi-Effros product $x\circ y = E_\mu(xy)$
%turns $\mathcal{H}^{\mu,N}$ into a von Neumann algebra which is denoted by $\mathcal{H}^N_\mu$ and it is called the Poisson
%boundary of $\mu$ with respect to the action $\alpha$.\\

One may consider the co-multiplication $\Gamma$ as a (left) action of $\G$ on $\LL$.
%In this case, we denote the space of $\mu$-harmonic elements by $\mathcal{H}^\mu$.
The following result describes $\h_\alpha^\mu$ for
$\alpha : \G \curvearrowright N$ in terms of $\h_\Gamma^\mu$.
%with   the corresponding harmonic elements to those associated with actions of $\G$.

%Before the following result, we need to recall the Fubini product for weak* closed operator spaces on Hilbert spaces.\\
%Let $V$ and $W$ be weak* closed subspaces of $B(H)$ and $B(K)$, respectively. The Fubini product of $V$ and $W$ is defined
%as $V\botimes_{\mathcal{F}}W=\{X\in B(H)\botimes B(K):(\omega\otimes i)(X)\in W, (i\otimes f)(X)\in V \emph{for all}\
%\omega\in B(H)_{*},f\in B(K)_{*}\}.$\\
%It is known form \cite{Proposition 3.3} that the Fubini product $V\botimes_{\mathcal{F}}W$ is a weak* closed subspace of
%$B(H\botimes K)$  such that we have the weak* homeomorphic completely isometric isomorphism
%$$V\botimes_{\mathcal{F}}W=(V_*\hat{\otimes}W_*)^*.$$
%In particular, if $M$ and $N$ are von Neumann algebras, the Fubini product coincides with the von Neumann algebra tensor
%product, it means
%$$N\botimes_{\mathcal{F}}M=N\botimes M.$$

\begin{proposition}\label{thm1}
Let $\alpha : \G \curvearrowright N$ be an action of a locally compact quantum group $\G$ on a von Neumann algebra $N$
and $\mu\in \PG$. Then
\[
\mathcal{H}_\alpha^\mu \,=\, \{\,x\in N \,:\, \alpha(x)\,\in\, \mathcal{H}_\Gamma^\mu \,\botimes\, N\,\}\,.
\]
\end{proposition}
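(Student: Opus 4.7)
The plan is to translate $\mu$-harmonicity in $N$ into a fixed-point condition in $\LL\vtp N$ via the intertwining identity (\ref{a1}) and injectivity of $\alpha$, and then to identify the fixed-point set of $\Phi^\mu\otimes\iota$ on $\LL\vtp N$ with $\mathcal{H}_\Gamma^\mu\vtp N$. Concretely, for $x\in N$, applying $\alpha$ to $\Phi_\alpha^\mu(x)=x$ and using (\ref{a1}) yields $(\Phi^\mu\otimes\iota)(\alpha(x))=\alpha(x)$; conversely, (\ref{a1}) shows $\alpha(\Phi_\alpha^\mu(x))=(\Phi^\mu\otimes\iota)(\alpha(x))$, so if the right-hand side equals $\alpha(x)$ then injectivity of the action $\alpha$ forces $\Phi_\alpha^\mu(x)=x$. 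Hence $x\in\mathcal{H}_\alpha^\mu$ if and only if $\alpha(x)\in\mathrm{Fix}(\Phi^\mu\otimes\iota)$.

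The remaining task is to show $\mathrm{Fix}(\Phi^\mu\otimes\iota)=\mathcal{H}_\Gamma^\mu\vtp N$. The inclusion $\mathcal{H}_\Gamma^\mu\vtp N\subseteq\mathrm{Fix}(\Phi^\mu\otimes\iota)$ is routine: elements of the algebraic tensor product $\mathcal{H}_\Gamma^\mu\odot N$ are visibly fixed, and normality of $\Phi^\mu\otimes\iota$ forces its fixed-point set to be weak*-closed and hence to contain the weak*-closure $\mathcal{H}_\Gamma^\mu\vtp N$. For the reverse inclusion, I would slice: for $y\in\mathrm{Fix}(\Phi^\mu\otimes\iota)$ and any $\omega\in N_*$, the normal slice map $\iota\otimes\omega$ satisfies $(\iota\otimes\omega)(\Phi^\mu\otimes\iota)=\Phi^\mu\circ(\iota\otimes\omega)$, whence $(\iota\otimes\omega)(y)\in\mathcal{H}_\Gamma^\mu$ for every such $\omega$. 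A Fubini / slice-map argument then yields $y\in\mathcal{H}_\Gamma^\mu\vtp N$.

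The main obstacle lies precisely in this last move. For a general weak*-closed operator subspace $V\subseteq\LL$ the Fubini product $\{y\in\LL\vtp N : (\iota\otimes\omega)(y)\in V\text{ for all }\omega\in N_*\}$ may strictly contain the spatial weak*-closure $V\vtp N$, and equality requires a slice-map property such as the weak* OAP on $N$. For $V=\mathcal{H}_\Gamma^\mu$ one option is to invoke such a property; another is to interpret $\mathcal{H}_\Gamma^\mu\vtp N$ as the Fubini product, which is a common convention in this framework. A more self-contained alternative would be to construct a normal UCP projection onto $\mathcal{H}_\Gamma^\mu$ and tensor with $\iota_N$, but the idempotent $E_\Gamma^\mu$ from (\ref{new1}) is UCP and typically not normal, so additional structural input would be needed to carry this through.
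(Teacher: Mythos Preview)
Your strategy is exactly the paper's: use (\ref{a1}) and injectivity of $\alpha$ to reduce both inclusions to $\alpha(x)\in\mathrm{Fix}(\Phi^\mu\otimes\iota)$, then slice with $\omega\in N_*$ to obtain $(\iota\otimes\omega)\alpha(x)\in\mathcal{H}_\Gamma^\mu$ for all $\omega$, i.e.\ $\alpha(x)\in\mathcal{H}_\Gamma^\mu\otimes_{\mathcal F}N$. You also correctly isolate the only nontrivial point, namely the passage from the Fubini product to $\mathcal{H}_\Gamma^\mu\vtp N$, and you leave it open.

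The paper closes this gap without any approximation hypothesis on $N$, without reinterpreting $\vtp$ as the Fubini product, and without a normal projection. The missing observation is that $\mathcal{H}_\Gamma^\mu$ is already a von Neumann algebra under the Choi--Effros product induced by $E_\Gamma^\mu$ (this is recalled just before the proposition). The Fubini product of dual operator spaces depends only on the dual operator space structures, and by \cite[Proposition~3.3]{Ruan1992} one has $V\otimes_{\mathcal F}W=(V_*\,\hat\otimes\,W_*)^*$; when both factors are von Neumann algebras this coincides with $V\vtp W$. Since $\mathcal{H}_\Gamma^\mu$ is, as a dual operator space, a von Neumann algebra (even though it is not a subalgebra of $\LL$), one gets $\mathcal{H}_\Gamma^\mu\otimes_{\mathcal F}N=\mathcal{H}_\Gamma^\mu\vtp N$ directly. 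Your third suggested route---producing a \emph{normal} idempotent onto $\mathcal{H}_\Gamma^\mu$---is not available in general and, as Corollary~\ref{cor1} later shows, would in the analogous situation already force the boundary to be trivial.
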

\begin{proof}
The inclusion $ \supseteq $ follows from $(\ref{a1})$ and injectivity of $\alpha$.
For the inverse inclusion, suppose that $x\in \mathcal{H}_\alpha^\mu$, i.e., $\Phi_\alpha^\mu(x) = x$.
%Since $(\Phi_\alpha^\mu\otimes\iota)\alpha(x) = \alpha\circ\phi_\mu(x)$
Then using $(\ref{a1})$ we obtain
\[
\Phi^\mu\, \big((\iota\otimes \om)\,\alpha(x)\big)\, =\,
(\iota\otimes \om)\,\big((\Phi^\mu\otimes\iota)\,\alpha(x)\big)\, =\,
(\iota\otimes \om)\, \big(\alpha\,(\Phi_\alpha^\mu(x))\big)\, =\,
(\iota\otimes \om)\,\alpha(x)
%\hspace{1cm} \forall \,\om \,\in \,N_*\, ,
\]
for all $\om\in N_*$.
Hence $(\iota\otimes \om)\alpha(x)\in \mathcal{H}_\Gamma^\mu$ for all $\om\in N_*$ and therefore
$\alpha(x)\in \mathcal{H}_\Gamma^\mu\otimes_{\mathcal F} N$,
where $\otimes_{\mathcal F}$ denotes the Fubini tensor product (cf. \cite{ERbook}).
Since $\mathcal{H}_\Gamma^\mu$ is a von Neumann algebra with its Choi--Effros product,
it follows from \cite[Proposition 3.3]{Ruan1992} that
$\mathcal{H}_\Gamma^\mu\otimes_{\mathcal F} N = \mathcal{H}_\Gamma^\mu\vtp N$,
and the inclusion follows.
%. As we mentioned above and by the von Neumann algebra structure of
%$\mathcal{H}_\mu$, we have $\alpha(x)\in \mathcal{H}_\mu\botimes N$ for any  $x\in N$.
%Therefore, $\alpha(x)\in \mathcal{H}_\mu\botimes N$, and
%$\alpha(x) = (\Phi_\mu\otimes\iota)\alpha(x) = \alpha\circ\phi_\mu(x)$ and so $\phi_\mu(x) = x$
%by injectivity of $\alpha$.
\end{proof}

\begin{corollary}\label{cor}
Let $\alpha : \G \curvearrowright N$ be an ergodic action of a locally compact quantum group $\G$ on a von Neumann algebra $N$
and $\mu\in \PG$. If $\mathcal{H}_\Gamma^\mu = \C 1$, then $\mathcal{H}_\alpha^\mu = \C1$.
\end{corollary}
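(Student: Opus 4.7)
The plan is to plug the hypothesis $\mathcal{H}_\Gamma^\mu = \C 1$ directly into Proposition~\ref{thm1}, and then exploit the action axiom together with ergodicity.

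First, by Proposition~\ref{thm1}, an element $x \in N$ lies in $\mathcal{H}_\alpha^\mu$ if and only if $\alpha(x) \in \mathcal{H}_\Gamma^\mu \vtp N$. Under the assumption $\mathcal{H}_\Gamma^\mu = \C 1$, the right-hand side collapses to $\C 1 \vtp N = 1 \otimes N$, so for any $x \in \mathcal{H}_\alpha^\mu$ there exists $y \in N$ with $\alpha(x) = 1 \otimes y$.

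Next, I would show that $y \in N^\alpha$ using co-associativity. Applying $\Gamma \otimes \id$ to $\alpha(x) = 1 \otimes y$ yields $1 \otimes 1 \otimes y$, while applying $\id \otimes \alpha$ yields $1 \otimes \alpha(y)$. Since $(\Gamma \otimes \id)\alpha = (\id \otimes \alpha)\alpha$, these must coincide, so $\alpha(y) = 1 \otimes y$, i.e., $y \in N^\alpha$. Ergodicity of $\alpha$ then forces $y = c \cdot 1$ for some scalar $c \in \C$.

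Finally, substituting back, $\alpha(x) = 1 \otimes c \cdot 1 = c (1 \otimes 1) = \alpha(c \cdot 1)$, and injectivity of $\alpha$ gives $x = c \cdot 1 \in \C 1$. There is no real obstacle here; the entire content of the corollary is already packaged in Proposition~\ref{thm1}, and the only small step is the passage from $\alpha(x) \in 1 \otimes N$ to $x \in N^\alpha$, which is a standard application of the co-associativity identity.
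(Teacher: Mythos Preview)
Your proof is correct and follows essentially the same line as the paper's: invoke Proposition~\ref{thm1} to write $\alpha(x)=1\otimes y$, use the co-associativity identity $(\Gamma\otimes\id)\alpha=(\id\otimes\alpha)\alpha$ to see that $\alpha(y)=1\otimes y$, and conclude by ergodicity (and injectivity of $\alpha$). The only difference is that you spell out the final injectivity step explicitly, which the paper leaves implicit.
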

\begin{proof}
Let $x\in \mathcal{H}_\alpha^\mu$. By the above proposition,
there exists $y\in N$ such that $\alpha(x) = 1\otimes y$.
Therefore
\[
1\otimes \alpha(y)\, = \, (\id \otimes \alpha)\,\alpha(x) \, = \, (\Gamma \otimes \id)\,\alpha(x) \, = \,
(\Gamma \otimes \id)\,(1\otimes y) \, = \, 1\otimes 1\otimes y\,,
\]
which implies $y \in \C 1$, by ergodicity of $\alpha$, hence $x\in \C1$.
\end{proof}

For $\om\in N_{*}$, we define a completely bounded map $\phi_\om: N\rightarrow \LL$
by $\phi_\om (x) \,=\, (\iota\otimes \om) \, \alpha(x)$.
%Then we have
%for any $f\in N_{*,1}^+$ and $\mu\in L^{1}(\Bbb{G})$, we have the
%following equations
Then the following are easily derived
%\begin{lemma}\label{new2}
%For every $\om \in N_*$ we have
\begin{equation}\label{new2}
(\iota\otimes \phi_\om)\,\alpha \,=\, \Gamma\circ\phi_\om
\hspace{1cm} \text{and} \hspace{1cm}
\Phi^\mu\circ\phi_\om \,=\, \phi_{\mu\star\om}\,,
\end{equation}
for all $\om\in N_*$ and $\mu\in\MG$, where $\mu\star\om := (\Phi_\alpha^\mu)_*(\om)$.
%\end{lemma}
%\begin{proof}
%Both identities are straightforward. For the first one, we see that
%\[
%(\iota\otimes \phi_\om)\,\alpha \,=\, (\iota\otimes \iota\otimes \om)\,(\iota\otimes \alpha)\,\alpha \,=\,
%(\iota\otimes \iota\otimes \om)\,(\Gamma\otimes \id)\,\alpha \,=\,
%\Gamma\,\let((\iota\otimes \om)\alpha\right) \,=\, \Gamma\circ\phi_\om\,.
%\]
%\end{proof}

An essential fact behind many of the results concerning the convolution maps in the quantum setting
is that %slicing the comultiplication by normal states defines a faithful map.
for any $\mu\in \PG$, the convolution map $\Phi^\mu$ is a faithful map on $\LL$.
This follows easily from the faithfulness of the Haar weight,
and its invariance under convolution maps (c.f. \cite[Lemma 3.4]{KNR}).
This, together with $(\ref{a1})$, yield that for any action $\alpha : \G \curvearrowright N$
and $\mu\in\PG$, the map $\Phi_\alpha^\mu$ is faithful on $N$.
%In fact, based on this result one can prove a quantum version of the \emph{Maximum Principle} \cite[3.5]{KNR}.
%Then, almost all main results on triviality of certain classes of
%harmonic operators follow from that (cf. \cite[Sections 3 and 5]{KNR}.
%
%In the following, we see that similarly in the case of actions of quantum groups one can
%deduce some fundamental facts regarding special types of harmonic operators from a maximum principle result
%in this context.
%
%Similarly to \cite{KNR}, we prove such a theorem using the faithfulness of slice maps by the normal states
%on the $\G$-space $N$.
But, in the absence of a ``Haar weight'' for a general action $\alpha : \G \curvearrowright N$,
the analogous result for the map $\phi_\om$ is by no means trivial.

%In the second equality $\Phi_\mu(x)= (\mu\otimes\iota)\Gamma(x)$.
%It can be proved that for any nonzero positive element,
%like $\omega\in L^1(\Bbb{G})+$, $\Phi_\omega$ is faithful map on $L^{\infty}(\Bbb{G})$. In fact, $\psi\circ \Phi_\omega=\psi$
%where $\psi$ is the right Haar weight of $\Bbb{G}$. It implies that $\phi_\omega$ is also a faithful map on $N$.  Let $x$ be
%a positive element in von Neumann algebra $N$ such that $\phi_\omega(x)=0$ and therefore
%$(\Phi_\omega\otimes\iota)\alpha(x)=\alpha\circ\phi_\omega(x)=0$. Since $(\Phi_\omega\otimes\iota)\alpha$ is a faithful map,
%$x$ should be equal by 0. We have a same result for $\phi_f$ when $f$ is a nonzero element in $ N_{*}^+$ which is proved
%later.

\begin{theorem}\label{lem}
Let $\alpha : \G \curvearrowright N$ be an action of a locally compact quantum group $\G$ on a
$\sigma$-finite von Neumann algebra $N$. Then the following are equivalent:
\begin{itemize}
\item [1.]
the action $\alpha$ is ergodic;
\item[2.]
for any nonzero $\om\in N_{*}^+$, the map $\phi_\om \,:\, N \longrightarrow \, \LL$ is faithful.
\end{itemize}
\end{theorem}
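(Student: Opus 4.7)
The plan is to prove the two directions separately; $(2) \Rightarrow (1)$ is a short contrapositive, while $(1) \Rightarrow (2)$ carries the real content. For the easy direction, if $\alpha$ is not ergodic, pick a projection $p \in N^\alpha$ with $0 \neq p \neq 1$ and a normal state $\om \in N_*^+$ supported on $p$ (e.g., a vector state associated to any unit vector in $pH$); then $\phi_\om(p^\perp) = (\iota \otimes \om)\alpha(p^\perp) = (\iota \otimes \om)(1 \otimes p^\perp) = \om(p^\perp)\cdot 1 = 0$ while $p^\perp \neq 0$, so $\phi_\om$ is not faithful.

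For $(1) \Rightarrow (2)$, assume $\alpha$ is ergodic and, for contradiction, that $\phi_\om$ is not faithful for some nonzero $\om \in N_*^+$ (rescaled to a state). Let $s \in N$ denote the support projection of $\phi_\om$, the smallest projection with $\phi_\om(1-s)=0$; non-faithfulness forces $s \neq 1$, while $\phi_\om(1) = \om(1) = 1$ forces $s \neq 0$. The plan is to show that $s^\perp \in N^\alpha$; since $0 \neq s^\perp \neq 1$, this will contradict ergodicity. The main input is the intertwining $(\iota \otimes \phi_\om)\alpha = \Gamma \circ \phi_\om$ from~(\ref{new2}): evaluating at $s$ gives $(\iota \otimes \phi_\om)\alpha(s) = \Gamma(\phi_\om(s)) = \Gamma(1) = 1 \otimes 1$, which matches $(\iota \otimes \phi_\om)(1 \otimes 1) = 1 \otimes \phi_\om(1) = 1 \otimes 1$. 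Hence the positive element $1 \otimes 1 - \alpha(s)$ is annihilated by the normal c.p.\ map $\iota \otimes \phi_\om$, whose support projection in $\LL \vtp N$ is $1 \otimes s$. The standard fact that a normal c.p.\ map with support $P$ annihilates a positive $z$ iff $zP = 0$ then yields $(1 \otimes 1 - \alpha(s))(1 \otimes s) = 0$, i.e., $\alpha(s)(1 \otimes s) = 1 \otimes s$, whence $1 \otimes s \leq \alpha(s)$, equivalently $\alpha(s^\perp) \leq 1 \otimes s^\perp$.

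The main obstacle is to promote this sub-invariance to the equality $\alpha(s^\perp) = 1 \otimes s^\perp$. The authors themselves flag the corresponding general statement as ``by no means trivial'': for any locally compact quantum group action $\alpha$ on a von Neumann algebra, a projection $p \in N$ satisfying $\alpha(p) \leq 1 \otimes p$ automatically lies in $N^\alpha$. I would establish this via Vaes's canonical unitary implementation: realizing $N$ in standard form on a Hilbert space $H$, fix a unitary $V \in \LL \vtp \B(H)$ with $\alpha(y) = V(1 \otimes y)V^*$ for $y \in N$; the corepresentation (cocycle) relation satisfied by $V$, combined with the constraint $V(1 \otimes s^\perp)V^* \leq 1 \otimes s^\perp$, forces $V$ to commute with $1 \otimes s^\perp$, yielding $\alpha(s^\perp) = 1 \otimes s^\perp$. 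Granted this upgrade, ergodicity forces $s^\perp \in \C \cdot 1$, hence $s^\perp \in \{0,1\}$, contradicting $s \neq 0, 1$.
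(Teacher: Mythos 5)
Your direction $(2)\Rightarrow(1)$ and your route to the sub-invariance inequality are both correct, and they track the paper's argument more closely than the different packaging suggests: your support projection $s$ of the map $\phi_\om$ is exactly $1-q$, where $q$ is the projection generating the paper's left ideal $M=\{y\in N:\alpha(y)(1\otimes p)=0\}$ (with $p$ the support of $\om$), and both arguments arrive at the same intermediate claim $\alpha(s^\perp)\le 1\otimes s^\perp$ --- you via the intertwining $(\iota\otimes\phi_\om)\alpha=\Gamma\circ\phi_\om$ together with the support projection of the slice map $\iota\otimes\phi_\om$, the paper via stability of $M$ under the maps $(\rho\otimes\iota)\alpha$. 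Up to that point your proof is sound.

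The gap is at the decisive step, which you correctly flag as the main obstacle but then do not actually carry out. The upgrade from $\alpha(s^\perp)\le 1\otimes s^\perp$ to equality does not follow from ``the corepresentation relation combined with $V(1\otimes s^\perp)V^*\le 1\otimes s^\perp$ forces $V$ to commute with $1\otimes s^\perp$'': the corepresentation identity by itself carries no information about inverses or adjoints, and in the classical case the analogous upgrade from $V_g\,p\,V_g^*\le p$ to equality is obtained by substituting $g^{-1}$ for $g$, i.e., by applying the antipode --- which for a general locally compact quantum group is an unbounded map and cannot be invoked this way. What is genuinely needed is the extra structure of the \emph{canonical} implementation in standard form, namely its compatibility with the modular data, and this is precisely what the paper extracts from \cite[Lemma 3.11]{V}: with $q'=1-q$ one computes
$\alpha(q')(1\otimes q)\bigl(\hat J\,\hat\nabla^{1/2}\xi\otimes\Lambda_\theta(y)\bigr)
= J_{\tilde\theta}\,\nabla_{\tilde\theta}^{1/2}\,\alpha(y^*)\,\alpha(q)(1\otimes q')\bigl(\xi\otimes\Lambda_\theta(q')\bigr)=0$
using $\alpha(q)(1\otimes q')=0$, whence $\alpha(q')(1\otimes q)=0$; the two sub-invariances then sum to the identity and force $\alpha(q)=1\otimes q$. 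Your sketch names the right tool (Vaes's implementation) but omits the actual mechanism --- the modular conjugations and $\nabla^{1/2}$'s, or equivalently the unitary antipode --- so as written the proof is incomplete exactly at the point where the real content of the theorem, and the role of $\sigma$-finiteness (a normal faithful state $\theta$ and its dual weight on the crossed product), resides.
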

%\begin{lemma}\label{lem}
%Let $\alpha$ be an ergodic action of a locally compact quantum group $\G$ on a
%$\sigma$-finite von Neumann algebra $N$. Then
%for any nonzero $\om\in N_{*}^+$ the map $\phi_\om \,:\, N \longrightarrow \, \LL$ is faithful.
%\end{lemma}
\begin{proof}
%We first prove the simple implication
$(2) \Rightarrow (1)$:\ Suppose
$\alpha$ is not ergodic. Let $e \in N^\alpha$ be a non-trivial projection,
and $\om\in N_{*}$ a normal state whose support projection is $e$.
Then we have
\[
\phi_\om(1-e)\, =\, (\id\otimes\om)\,\alpha(1-e) \,=\, \langle\, \om\,,\,1-e\,\rangle\, 1\, =\, 0\,.
\]
%For the non-trivial implication
$(1) \Rightarrow (2)$: \
Let $\om\in N_{*}^+$ be non-zero, and suppose $x\in N^+$ is such that $\phi_\om (x) = 0$.
Denote by $p$ the support projection of $\om$. Then for every $\rho\in \LO$ we have
$\om((\rho \otimes\iota)\alpha(x)) = 0$. This implies
$(\rho \otimes\iota)(\alpha(x)(1\otimes p)) = 0$ for all $\rho\in \LO$, and therefore
$\alpha(x)(1\otimes p) = 0$.
Now consider the left ideal
\[
M \,:=\, \{\,y\in N \,:\, \alpha(y)\,(1\otimes p) \,=\, 0\, \}\,,
\]
and suppose $q\in N$ is a projection such that $M = Nq$.
Then for every $\rho\in \LO$ we obtain
\[
\alpha\,((\rho \otimes\iota)\,\alpha(q))\,(1 \otimes p) \,=\,
%\left((\Phi^\rho\otimes\iota)\,\alpha(y)\right)\,(1 \otimes p) \,=\,
(\Phi^\rho\otimes\iota)\,\left(\alpha(q)\,(1\otimes p)\right) \,=\, 0\,,
\]
which shows that $(\rho \otimes\iota) \alpha(q) \in M$, whence $[(\rho \otimes\iota) \alpha(q)] q =  (\rho \otimes\iota) \alpha(q)$
for all $\rho\in \LO$.
%It follows that $\Phi_\alpha^\rho(M)\subseteq M$, for any $\rho\in \LO$.
%In particular, $\phi_\omega(q)q=\phi_\omega(q)$, which
This implies $\alpha(q)(1\otimes q) = \alpha(q)$, i.e.
\begin{equation}\label{032}
\alpha(q)\leq 1\otimes q\,.
\end{equation}
Denote $q' = 1 - q$, and let $\theta$ be a normal faithful state on $N$ with the $GNS$ map
$\Lambda_\theta : N \rightarrow H_\theta$. Let
$\tilde\theta$ be the dual weight on the crossed product $\G\, {}_\alpha\!{\ltimes} \,N$ (cf. \cite[Definition 3.1]{V}),
with modular operator and modular conjugation $\nabla_{\tilde\theta}$ and $J_{\tilde\theta}$. Let
and $\hat\nabla$ and $\hat J$ be the modular operator and modular conjugation of the Haar weight $\hat\fee$ of the dual quantum group $\hat \G$.
Then, %using (\ref{032}),
for every $y\in N$ and $\xi \in \mathcal{D}({\hat\nabla}^{\frac12})$ %,by \cite[Lemma 3.11]{V}
we have
\[  \begin{array}{cll}
\alpha(q')\,(1\otimes q)\,(\hat J\, {\hat \nabla}^{\frac 12}\,\xi\,\otimes\,\Lambda_\theta(y))
& = & \alpha(q')\,(\hat J\, {\hat \nabla}^{\frac 12}\,\xi\,\otimes\,\Lambda_\theta(qy))\\
& = & J_{\tilde\theta}\, \nabla_{\tilde\theta}^{\frac 12}\,\alpha(y^*q)\,(\xi\otimes\Lambda_\theta(q'))
\hspace{0.5cm} \text{\bigg(\,by \cite[Lemma 3.11]{V}\,\bigg)}\\
& = & J_{\tilde\theta}\, \nabla_{\tilde\theta}^{\frac 12}\,\alpha(y^*)\,\alpha(q)\,(1\otimes q')\,
(\xi\otimes\Lambda_\theta(q'))\\
& = & 0 \hspace{2.7cm} \text{\bigg(\,since $\alpha(q)(1\otimes q')=0$ by (\ref{032})\,\bigg)}\,.\\
\end{array} \]
%In which the last equality is because of $\alpha(q)(1\otimes p)=0$ (Since $q\in M$). So the above equalities show that
Hence $\alpha(q')(1\otimes q)=0$, which then together with
%Consider $p=1-q$, then $\alpha(1-q)(1\otimes q)=0$.
%$\alpha(1-q)\leq 1\otimes(1-q)$. Hence, by
(\ref{032}) implies that
%On the other hand, we showed that
%$\alpha(q)\leqslant 1\otimes q$, so $\alpha(1)\leqslant 1\otimes1$. As we have equality for the last
%inequality, the another inequalities must be changed to equality. Hence
$\alpha(q)= 1\otimes q$. Since $\alpha$ is ergodic, we conclude that $q=0$ or $q=1$. %, by ergodicity of $\alpha$.
But $q=1$ %The second equality means that $1\otimes p = 0$, which gives
implies $\om = 0$, so it follows that $q = 0$, and in particular $x=0$.
%This completes the proof.
\end{proof}

Now, using the above theorem, we can prove a quantum version of the ``\emph{Maximum Principle}'',
which clears the path to generalize a number of results on the triviality of certain classes of harmonic functions.
For this, we need to impose a (rather weak) restriction on $\mu$.
We prove a Maximum Principle for \emph{spread-out} quantum probability measures.
The measure $\mu\in \PG$ is called spread out if there are $n\in\mathbb N$ and $0\neq \om \in\LO^+$ such that
$\om\leq\mu^n$, or equivalently, if $\mu^n = \om_a + \om_s$, for some $n\in\mathbb N$,
where $0\neq \om_a\in\LO^+$ and $\om_s\in\MG^+$.

Note that this is not a very restrictive condition. In the classical
setting of locally compact groups, in order to have a well-defined notion of
measure-theoretical boundaries, one has to restrict to such probability measures.
For discrete (quantum) groups, every (quantum) measure is spread-out. Also the assumption that $\mu$ is spread-out
is not necessary in the following lemma (and results after that) if $\G$ is co-amenable (which includes the case of locally compact groups).
On the other hand, such a restriction is expected as we work in
the very general setting of measurable actions and do not impose any continuity condition on the action.

\begin{lemma} [The Maximum Principle] \label{lemma}
Let $\alpha : \G \curvearrowright N$ be an ergodic action of a locally compact quantum group $\g$ on a $\sigma$-finite
von Neumann algebra $N$, and let $\mu\in\PG$ be a non-degenerate spread-out state.
Suppose that $x\in \h_\alpha^\mu$ is a
self-adjonit element which attains its norm on $N_{*,1}^+$. Then $x\in \C1$.
\end{lemma}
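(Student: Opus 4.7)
The plan is to reduce the lemma to the faithfulness statement of Theorem \ref{lem}. After the trivial case $x=0$, normalize so that $\|x\|=1$; replacing $x$ by $-x$ if necessary, pick $\omega\in N_{*,1}^+$ with $\omega(x)=1$, and set $y=1-x$. Since $\|x\|=1$ and $x=x^*$, the spectrum of $x$ lies in $[-1,1]$, so $y\geq 0$; moreover $\Phi_\alpha^\mu(y)=1-x=y$, so $y\in\h_\alpha^\mu$, while $\omega(y)=0$.

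Iterating harmonicity via $\Phi_\alpha^\mu\circ\Phi_\alpha^\mu=\Phi_\alpha^{\mu\star\mu}$ (immediate from $(\ref{a1})$ and coassociativity) gives $y=\Phi_\alpha^{\mu^n}(y)$, hence $(\mu^n\star\omega)(y)=\omega(y)=0$ for every $n$. I would then use the spread-out hypothesis to fix $n_0$ and $0\neq\rho\in\LO^+$ with $\rho\leq\mu^{n_0}$. For each $k\geq 0$ one has the positive decomposition $\mu^{k+n_0}=\mu^k\star\rho+\mu^k\star(\mu^{n_0}-\rho)$ in $\MG^+$, and since $y\geq 0$ and $\omega\geq 0$,
\[
0\,=\,(\mu^{k+n_0}\star\omega)(y)\,\geq\,((\mu^k\star\rho)\star\omega)(y)\,\geq\, 0\,,
\]
forcing $((\mu^k\star\rho)\star\omega)(y)=0$. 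Because $\mu^k\star\rho\in\LO$, we have $\Phi_\alpha^{\mu^k\star\rho}=((\mu^k\star\rho)\otimes\iota)\alpha$, so this rewrites as
\[
(\mu^k\star\rho)(\phi_\omega(y))\,=\,0\qquad\text{for all } k\geq 0\,.
\]

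Thus $\phi_\omega(y)\in\LL^+$ is annihilated by every member of the family $\{\mu^k\star\rho\}_{k\geq 0}\subset\LO^+$. The crux of the argument is then to use the non-degeneracy of $\mu$ to conclude that the supports of this family join to $1\in\LL$, which forces $\phi_\omega(y)=0$. This is what I expect to be the main obstacle: one must distill from the non-degeneracy condition (in combination with spread-out) the statement that the orbit $\{\mu^k\star\rho\}_k$ is jointly faithful on $\LL^+$. Granted $\phi_\omega(y)=0$, Theorem \ref{lem} completes the proof: as $\alpha$ is ergodic and $0\neq\omega\in N_{*}^+$, the map $\phi_\omega$ is faithful, so $y\geq 0$ and $\phi_\omega(y)=0$ force $y=0$, whence $x=\|x\|\cdot 1\in\C1$.
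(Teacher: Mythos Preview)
Your strategy coincides with the paper's: reduce to $y=1-x\geq 0$ with $\omega(y)=0$, produce enough normal functionals on $\LL$ annihilating $\phi_\omega(y)$ to force $\phi_\omega(y)=0$, and finish via Theorem~\ref{lem}. Everything up to that point is correct. The only gap is the one you explicitly flag---joint faithfulness of the family $\{\mu^k\star\rho\}_{k\geq 0}$ on $\LL$---and this is precisely where the substance of the paper's proof lies.

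Rather than arguing about a family, the paper packages both hypotheses into a \emph{single} faithful element of $\LO^+$. Set $\rho:=\sum_{n\geq 1}2^{-n}\mu^n\in\PG$; spread-out gives $\rho=\omega_a+\omega_s$ with $0\neq\omega_a\in\LO^+$, while non-degeneracy of $\mu$ makes $\rho$ faithful on $\CZ$ and hence (strictly extended) on $\CG$. The key claim is that $\omega_a\star\rho\in\LO^+$ is faithful on $\LL$. With this in hand one computes
\[
0\,\leq\,\langle\omega_a\star\rho,\,\phi_\nu(1-x)\rangle
\,=\,\langle\nu,\,\Phi_\alpha^{\omega_a}\Phi_\alpha^\rho(1-x)\rangle
\,=\,\langle\nu,\,\Phi_\alpha^{\omega_a}(1-x)\rangle
\,\leq\,\langle\nu,\,\Phi_\alpha^\rho(1-x)\rangle
\,=\,\langle\nu,\,1-x\rangle\,=\,0,
\]
using $\Phi_\alpha^\rho(1-x)=1-x$ and $\omega_a\leq\rho$, and concludes exactly as you do. Your route can be completed by the same mechanism---summing your family yields $\big(\sum_{k\geq 1}2^{-k}\mu^k\big)\star\rho$, and one argues via faithfulness of the right slice $(\iota\otimes\rho)\Gamma$ on $\LL$ together with faithfulness of $\sum_{k}2^{-k}\mu^k$ on $\CG$---but note that the paper places the $\LO$-factor on the \emph{left} and the faithful measure on the \emph{right}, which makes the faithfulness of the convolution more transparent (it reduces directly to the already-noted faithfulness of $\Phi^{\omega_a}$ on $\LL$).
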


\begin{proof}
Suppose that $0\leq x\in  \h_\alpha^\mu$, $\|x\| = 1$,
and $\nu$ is a normal state on $N$ such that $\langle \nu,x\rangle = 1$.
Let $\mu\in\PG$ be non-degenerate and spread-out.
Define $\displaystyle\rho := \sum_{n=1}^\infty \frac{1}{2^n} \mu^n \in \PG$.
Since $\mu$ is spread-out, we have $\rho = \om_a + \om_s$, where $0\neq \om_a\in\LO^+$ and $\om_s\in\MG^+$.
Moreover, since $\mu$ is non-degenerate, $\rho$ is faithful on $\CZ$ and extends uniquely to a strictly continuous faithful state on the multiplier $C^*$-algebra $\CG$ of $\CZ$.
Therefore $\om_a\star\rho \in \LO^+$ is faithful on $\LL$, and
%Also suppose $n\in\mathbb N$ is such that $\mu^n = \om_a + \om_s$
%where $0\neq \om_a\in\LO^+$ and $\om_s\in\MG^+$. Then
%$0\leq 1-x \in \mathcal{H}_\alpha^\mu$, and
%$\langle \nu , \Phi_\alpha^{\om_a}(1-x) \rangle \leq \langle \nu , \Phi_\alpha^{\mu^n}(1-x) \rangle
%= \langle \nu , 1-x \rangle = 0$. Hence
\begin{eqnarray*}
0\,\leq\,
\langle\, \om_a\star\rho\,,\, \phi_\nu(1-x)\,\rangle &=&
\langle\, \nu\,,\, \Phi_\alpha^{\om_a\star\rho}(1-x) \,\rangle \,=\,
\langle\, \nu\,,\, \Phi_\alpha^{\om_a}\, \Phi_\alpha^{\rho}(1-x) \,\rangle\\
%&\leq&
%\langle\, \nu\,,\, \Phi_\alpha^{\om_a} (1-x) \,\rangle
&=&
\langle\, \nu\, ,\, \Phi_\alpha^{\om_a}(1-x)\, \rangle
\,\leq\,
\langle\, \nu \,,\, \Phi_\alpha^{\rho}(1-x)\, \rangle\\
&=& \langle\, \nu\, ,\, 1-x\, \rangle \,=\, 0\,.
\end{eqnarray*}
%\langle\, \nu\, , \Phi_\alpha^{\om_a}(1-x) \rangle \leq \langle \nu , \Phi_\alpha^{\mu^n}(1-x) \rangle
%= \langle \nu , 1-x \rangle = 0
%\]
By Theorem \ref{lem}, the map $\phi_\nu$ is faithful, hence $1-x = 0$.
%
%We may assume that $\lVert x\rVert=1$ and that $\om$ is a normal state such that $\langle f,x \rangle = \lVert x\rVert=1$.
%Then $1-x$ is a positive element in $N\cap\mathcal{H}^N_\mu$. By contradiction, we suppose that $x\not=1$, then we have
%\[
%(\mu\otimes f)\alpha(1) =  1 = \langle f,x \rangle = \langle f, (\mu\otimes \iota)\alpha(x) \rangle=(\mu\otimes f)\alpha(x).
%\]
%Therefore $\mu((\iota\otimes f)\alpha(1-x))=0$.
%Since $\mu$ is faithful state and $(\iota\otimes f)\alpha(1-x)$ is positive element
%in $\LL$, $(\iota\otimes f)\alpha(1-x)=0$. Now by Theorem \ref{lem}, $1-x$ must be equal by zero, which is a
%contradiction. Hence, we must have $x=1$.
\end{proof}

\begin{proposition}\label{thm3}
Let $\alpha : \G \curvearrowright N$ be an ergodic action of a locally compact quantum group $\g$ on a $\sigma$-finite
von Neumann algebra $N$, and let $\mu\in\PG$ be non-degenerate and spread-out.
Then the following are equivalent:
\begin{itemize}
\item[1.]
every normal state on $\mathcal{H}_\alpha^\mu$ can be extended to a normal state on $N$;
\item[2.]
$\mathcal{H}_\alpha^\mu \,=\, \C1$.
\end{itemize}
\end{proposition}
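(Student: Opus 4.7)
The implication $(2) \Rightarrow (1)$ is immediate, since if $\mathcal{H}_\alpha^\mu = \C 1$ the unique normal state $1\mapsto 1$ extends to any normal state on $N$.

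For $(1) \Rightarrow (2)$, my plan is to combine the Choi--Effros von Neumann algebra structure on $\mathcal{H}_\alpha^\mu$ with the Maximum Principle (Lemma \ref{lemma}). Since every von Neumann algebra is generated by its projections via spectral theory, it suffices to show under hypothesis $(1)$ that the only Choi--Effros projections in $\mathcal{H}_\alpha^\mu$ are $0$ and $1$.

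So fix a nonzero $p \in \mathcal{H}_\alpha^\mu$ with $p=p^*$ and $E_\alpha^\mu(p^2)=p$. A direct expansion
\[
E_\alpha^\mu((1-p)^2) \,=\, 1-2p+E_\alpha^\mu(p^2) \,=\, 1-p
\]
shows that $1-p$ is also a Choi--Effros projection, so $0 \leq p \leq 1$ in the $C^*$-order of $\mathcal{H}_\alpha^\mu$. Since the Choi--Effros positive cone coincides with the cone inherited from $N$ (a standard consequence of $E_\alpha^\mu$ being a unital completely positive idempotent with range $\mathcal{H}_\alpha^\mu$), the same inequalities hold in $N$, and in particular $\|p\|_N=1$. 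To supply the normal state required by the Maximum Principle, I use the von Neumann algebra structure of $\mathcal{H}_\alpha^\mu$: the corner $p\, \mathcal{H}_\alpha^\mu\, p$ is a nonzero von Neumann algebra with unit $p$, so any normal state $\tau$ on it yields a normal state $\omega := \tau(p\, \cdot\, p)$ on $\mathcal{H}_\alpha^\mu$ with $\omega(p)=1$. Hypothesis $(1)$ then lifts $\omega$ to a normal state $\tilde\omega$ on $N$. Now $p$ is a self-adjoint element of $\mathcal{H}_\alpha^\mu$ with $0 \leq p \leq 1$ in $N$, $\|p\|_N = 1$, and $\tilde\omega(p) = 1$, so Lemma \ref{lemma} forces $p = 1$.

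The main obstacle I anticipate is not the high-level strategy, which is clean, but rather the careful coordination of two algebraic structures on $\mathcal{H}_\alpha^\mu$: the Choi--Effros $C^*$-structure (in which $p$ is a projection and in which we locate the normal state $\omega$ via the corner) and the operator system structure inherited from $N$ (in which the Maximum Principle is formulated). The identification of the two positive cones and of the two norms is standard but essential, and once it is invoked cleanly, the remaining argument is a short chain of implications.
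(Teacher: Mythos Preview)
Your proposal is correct and follows essentially the same approach as the paper: take a projection in the Choi--Effros von Neumann algebra $\mathcal{H}_\alpha^\mu$, observe that it attains its norm on a normal state of $\mathcal{H}_\alpha^\mu$, extend that state to $N$ via hypothesis (1), and conclude by Lemma~\ref{lemma}. The paper's version is more terse---it simply asserts that a projection attains its norm on a normal state---whereas you spell out the corner construction and the identification of the two positive cones, but the underlying argument is identical.
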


\begin{proof}
We just need to prove $(1) \Rightarrow (2)$. Consider $\mathcal{H}_\alpha^\mu$ with its
von Neumann algebra structure, and let $p \in \mathcal{H}_\alpha^\mu$ be a projection. Then $p$ attains its norm on a
normal state on $\mathcal{H}_\alpha^\mu$, which, by the assumption, can be extended to a normal state on $N$.
Hence $p\in\C1$, by Lemma \ref{lemma}, therefore $\mathcal{H}_\alpha^\mu \,=\, \C1$.
\end{proof}
The following corollaries are then immediate.

\begin{corollary}\label{cor1}
Let $\alpha : \G \curvearrowright N$ be an ergodic action of a locally compact quantum group $\g$ on a $\sigma$-finite
von Neumann algebra $N$, and let $\mu\in\PG$ be non-degenerate and spread-out.
If the map $E_\alpha^\mu:N \rightarrow \mathcal{H}_\alpha^\mu$ is normal, then
$\mathcal{H}_\alpha^\mu \,=\, \C1$.
\end{corollary}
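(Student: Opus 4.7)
The plan is to reduce the statement directly to Proposition \ref{thm3} by verifying its condition (1) under the normality hypothesis on $E_\alpha^\mu$. Recall from \eqref{new1} that $E_\alpha^\mu$ is an idempotent unital completely positive map onto the operator system $\mathcal{H}_\alpha^\mu$, and the Choi--Effros product on $\mathcal{H}_\alpha^\mu$ is defined precisely so that $E_\alpha^\mu$ becomes a (generalized) conditional expectation with $E_\alpha^\mu|_{\mathcal{H}_\alpha^\mu} = \id$.

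The first step is to take an arbitrary normal state $\nu$ on the von Neumann algebra $\mathcal{H}_\alpha^\mu$ (normality here being with respect to the weak$^*$ topology inherited from $N$, which matches the predual coming from the Choi--Effros structure) and form $\tilde\nu := \nu \circ E_\alpha^\mu$. This is a state on $N$, and since $E_\alpha^\mu$ fixes $\mathcal{H}_\alpha^\mu$ pointwise, $\tilde\nu$ restricts to $\nu$ on $\mathcal{H}_\alpha^\mu$. The normality assumption on $E_\alpha^\mu$ then makes $\tilde\nu$ a normal state on $N$, so $\nu$ extends to a normal state on $N$.

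This verifies hypothesis (1) of Proposition \ref{thm3}, which under the present assumptions on $\alpha$ and $\mu$ immediately gives $\mathcal{H}_\alpha^\mu = \C 1$. I do not expect any real obstacle; the content is essentially bookkeeping, with the only mildly delicate point being the compatibility between normality on $\mathcal{H}_\alpha^\mu$ as a Choi--Effros von Neumann algebra and weak$^*$ continuity inherited from $N$. That compatibility is standard and ensures $\nu \circ E_\alpha^\mu$ is weak$^*$ continuous on $N$ whenever $\nu$ is weak$^*$ continuous on $\mathcal{H}_\alpha^\mu$ and $E_\alpha^\mu$ is normal.
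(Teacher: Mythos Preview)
Your argument is correct and is exactly the ``immediate'' deduction the paper has in mind: composing a normal state on $\mathcal{H}_\alpha^\mu$ with the normal idempotent $E_\alpha^\mu$ yields a normal extension to $N$, so condition (1) of Proposition~\ref{thm3} holds and the conclusion follows.
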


\begin{corollary}%\label{cor1}
Let $\alpha : \G \curvearrowright N$ be an ergodic action of a locally compact quantum group $\g$ on a $\sigma$-finite
von Neumann algebra $N$, and let $\mu\in\PG$ be non-degenerate and spread-out. If
$\mathcal{H}_\alpha^\mu$ is a subalgebra of $N$, then
$\mathcal{H}_\alpha^\mu \,=\, \C1$.
\end{corollary}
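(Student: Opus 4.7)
The plan is to reduce the corollary to Proposition \ref{thm3} by verifying its condition (1): that every normal state on $\mathcal{H}_\alpha^\mu$ extends to a normal state on $N$.

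First I would note that the subalgebra hypothesis forces the Choi--Effros product on $\mathcal{H}_\alpha^\mu$ to coincide with the product inherited from $N$. Indeed, since $E_\alpha^\mu$ restricts to the identity on $\mathcal{H}_\alpha^\mu$, for $x, y \in \mathcal{H}_\alpha^\mu$ we have $x \circ y = E_\alpha^\mu(xy)$, and by assumption $xy \in \mathcal{H}_\alpha^\mu$, so $x \circ y = xy$. Together with the weak* closedness and involution-invariance of $\mathcal{H}_\alpha^\mu$, this exhibits $\mathcal{H}_\alpha^\mu$ as a unital von Neumann subalgebra of $N$ in the standard sense.

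Next, given a normal state $\phi$ on $\mathcal{H}_\alpha^\mu$, I would extend it to a normal state on $N$ as follows. Since $\mathcal{H}_\alpha^\mu$ is weak* closed in $N$, its predual is the quotient $N_* / (\mathcal{H}_\alpha^\mu)_\perp$, and the corresponding quotient map $N_* \to (\mathcal{H}_\alpha^\mu)_*$ is a surjective contraction. Hahn--Banach therefore provides $\tilde\phi \in N_*$ with $\tilde\phi|_{\mathcal{H}_\alpha^\mu} = \phi$ and $\|\tilde\phi\| = \|\phi\| = 1$. Since $\tilde\phi(1) = \phi(1) = 1 = \|\tilde\phi\|$, the standard characterization of states on unital $C^*$-algebras yields that $\tilde\phi$ is positive, hence a normal state on $N$ extending $\phi$. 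Condition (1) of Proposition \ref{thm3} is now verified, and the proposition immediately gives $\mathcal{H}_\alpha^\mu = \C 1$.

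The main point to watch is that Hahn--Banach only provides a norm-preserving extension, not \emph{a priori} a positive one; positivity must be deduced separately from the equality $\|\tilde\phi\| = \tilde\phi(1)$. A parallel but more hands-on route, should one wish to avoid this appeal, would be to pick any nonzero projection $p \in \mathcal{H}_\alpha^\mu$ (which by the first step is a genuine projection in $N$), produce a normal state $\nu$ on $N$ with $\nu(p)=1$ (for instance $\nu(\cdot) = \theta(p\cdot p)/\theta(p)$ for a faithful normal state $\theta$), and invoke the Maximum Principle (Lemma \ref{lemma}) directly to force $p \in \C 1$; since $\mathcal{H}_\alpha^\mu$ is then a von Neumann algebra whose only projections are $0$ and $1$, it must equal $\C 1$.
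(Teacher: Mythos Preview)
Your approach is exactly what the paper intends by calling the corollary ``immediate'' from Proposition~\ref{thm3}: the subalgebra hypothesis makes the Choi--Effros product coincide with the ambient one, so $\mathcal{H}_\alpha^\mu$ is a genuine von Neumann subalgebra of $N$, and then one verifies condition~(1). Your alternative route via projections and Lemma~\ref{lemma} is precisely the argument the paper spells out later in Proposition~\ref{thm4}.

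One small caveat on the first route: the phrase ``Hahn--Banach therefore provides $\tilde\phi\in N_*$ with $\|\tilde\phi\|=\|\phi\|$'' is not quite right. Hahn--Banach would give a norm-preserving extension in $N^*$, not in $N_*$; and while the quotient map $N_*\to(\mathcal{H}_\alpha^\mu)_*$ is surjective, the infimum defining the quotient norm need not be attained in a general Banach space. What actually makes this work is the standard structure theorem for normal positive functionals: any normal state on a von Neumann algebra $M\subset B(H)$ is of the form $\sum_n\omega_{\xi_n,\xi_n}$ for an $\ell^2$-sequence $(\xi_n)$ in $H$, and that formula extends verbatim to $N$. Your projection argument avoids this issue entirely, so the proof stands.
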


%\begin{theorem}
%Let $\Bbb{G}$ be a locally compact quantum group. If the quantum group $\Bbb G$ is abelian then $\mathcal{H}_\mu=\Bbb C1$
%\end{theorem}
%An application of Cauchy--Schwarz inequality for completely contractive and completely positive maps on operator systems
%we can obtain the following theorem.

We close the section by proving a result on the multiplicative structure of $\h_\alpha^\mu$.
The following proposition shows that $\mathcal{H}_\alpha^\mu$
can not contain any non-trivial $*$-subalgebra of $N$.
\begin{proposition}\label{thm4}
Let $\alpha : \G \curvearrowright N$ be an ergodic action of a locally compact quantum group $\g$ on a $\sigma$-finite
von Neumann algebra $N$, and let $\mu\in\PG$ be non-degenerate and spread-out.
If $x\in \mathcal{H}_\alpha^\mu$ is such that $xx^*,x^*x\in \mathcal{H}_\alpha^\mu$, then $x\in\C1$.
\end{proposition}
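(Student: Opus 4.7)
The plan is to show that the whole von Neumann subalgebra $W^*(x) \subseteq N$ generated by $\{1, x\}$ is contained in $\h_\alpha^\mu$, and then to use the Maximum Principle (Lemma \ref{lemma}) to force the only projections of $W^*(x)$ to be the trivial ones, whence $W^*(x) = \C 1$ and in particular $x \in \C 1$.

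First I would invoke Choi's theorem on the multiplicative domain of the unital completely positive map $\Phi_\alpha^\mu$. Since $\Phi_\alpha^\mu(x) = x$, the hypothesis that $x^*x$ and $xx^*$ are both fixed by $\Phi_\alpha^\mu$ translates to the equalities
\[
\Phi_\alpha^\mu(x^*x) = x^*x = \Phi_\alpha^\mu(x)^* \Phi_\alpha^\mu(x), \qquad \Phi_\alpha^\mu(xx^*) = xx^* = \Phi_\alpha^\mu(x)\,\Phi_\alpha^\mu(x)^*,
\]
which are exactly the saturated Kadison--Schwarz inequalities that characterise the multiplicative domain of $\Phi_\alpha^\mu$. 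Hence $x$ lies in this multiplicative domain, which by Choi's theorem is a $*$-subalgebra of $N$ on which $\Phi_\alpha^\mu$ acts as a $*$-homomorphism. Because $\Phi_\alpha^\mu$ fixes $1$ and $x$ (and hence $x^*$, since $\Phi_\alpha^\mu$ is $*$-preserving), it fixes every $*$-polynomial in $x$; taking weak$^*$-closure and using that $\h_\alpha^\mu$ is weak$^*$-closed, I conclude $W^*(x) \subseteq \h_\alpha^\mu$.

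For the second step, let $p \in W^*(x)$ be a non-zero projection. Since $N$ is $\sigma$-finite, it admits a faithful normal state $\theta$, and then $\nu(\cdot) := \theta(p \cdot p)/\theta(p)$ is a well-defined normal state on $N$ with $\nu(p) = 1 = \|p\|$. So $p$ is a self-adjoint element of $\h_\alpha^\mu$ attaining its norm on a normal state, and Lemma \ref{lemma} yields $p \in \C 1$, i.e.\ $p = 1$. Thus $W^*(x)$ has no non-trivial projections, and by the spectral theorem applied to its self-adjoint elements this forces $W^*(x) = \C 1$.

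The main obstacle is the first step: promoting information about the three specific elements $x$, $x^*x$, $xx^*$ to the stronger statement that an entire von Neumann subalgebra of $N$ lies in $\h_\alpha^\mu$. Choi's multiplicative domain theorem is the clean tool for this, but the same conclusion can be extracted from the Choi--Effros product on $\h_\alpha^\mu$: the assumption amounts to $x \circ x^* = xx^*$ and $x^* \circ x = x^*x$, meaning that on the part of $\h_\alpha^\mu$ generated by $x$ the Choi--Effros product coincides with the ambient operator product of $N$, so this part is genuinely a von Neumann subalgebra of $N$ sitting inside $\h_\alpha^\mu$.
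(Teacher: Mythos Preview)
Your proof is correct and follows essentially the same route as the paper: both arguments use the saturated Kadison--Schwarz equalities to place $x$ in the multiplicative domain of $\Phi_\alpha^\mu$ (the paper cites \cite[Corollary 5.2.2]{ERbook} for this, which is Choi's theorem), conclude that the weak$^*$-closed $*$-algebra generated by $x$ lies in $\h_\alpha^\mu$, and then kill any nontrivial projection via the Maximum Principle. Your explicit construction of the normal state $\nu(\cdot)=\theta(p\,\cdot\,p)/\theta(p)$ from a faithful normal state $\theta$ is a nice touch that the paper leaves implicit.
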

\begin{proof}
%We know that $\phi_\mu$ is a unital and completely positive map on operator system $N$, hence $\phi_\mu$ is also a completely
%contractive map. On the other hands,
By the assumption we have $\Phi_\alpha^\mu(x^*x) = x^*x = \Phi_\alpha^\mu(x^*)\Phi_\alpha^\mu(x)$.
Hence, by \cite[Corollary 5.2.2]{ERbook},
$\Phi_\alpha^\mu(yx) = \Phi_\alpha^\mu(y) \Phi_\alpha^\mu(x)$,
for all $y\in N$. %In particular, we have latter equality for any $b\in \mathcal{H}^N_\mu$.
%So for any $b\in\mathcal{H}^N_\mu$, $bx\in\mathcal{H}^N_\mu$.
Similarly, we have $yx^*\in\mathcal{H}_\alpha^\mu$, for all $y\in \mathcal{H}_\alpha^\mu$.
Hence, the weak$^*$ closed subalgebra generated by $1, x, x^*$ (denoted by $\mathcal{A}$)
in $N$ is contained in $\mathcal{H}_\alpha^\mu$.
%On the other hand, $\mathcal{A}$ is a von Neumann subalgebra $N$ and it is generated by its projections.
Now, let $p\in \mathcal{A}$ be a nonzero projection
(in the von Neumann algebra structure of $\mathcal{A}$ inherited from $N$),
%and let $H$ be a Hilbert space on which $N$ acts.
%Then there exists a unit vector $\xi\in H$ such that $\lVert p\rVert=1=\langle p\xi,\xi\rangle$.
since $p$ attains its norm on a normal state on $N$, it follows from Lemma \ref{lemma} that
$p=1$, therefore $\mathcal{A}=\C1$. In particular, $x\in \C1$.
\end{proof}

%We close this section by proving the following theorem about compact operators which shows that for every finite
%dimensional von Neumann algebra $N$, $\mathcal{H}^N_\mu=\Bbb C1$.

\section{Triviality of certain classes of harmonic elements}

Having the Maximum Principle at our disposal, we can prove
the following result, similar to \cite[Theorem 3.7]{KNR}.
%It is well-known that if $\mu$ is a non-degenerate measure on
%a locally compact group $G$, then every continuous $\mu$-harmonic function on $G$
%that vanishes at infinity is constant. We prove a quantum version of this result.
We denote by $\mathcal{K}(H)$ the space of all compact operators on a Hilbert space $H$.

\begin{theorem}\label{thm5}
Let $\alpha : \G \curvearrowright N$ be an ergodic action of a locally compact quantum group $\g$ on a $\sigma$-finite
von Neumann algebra $N$. If $\mu\in\PG$ is non-degenerate and spread-out and $N$ acts on a Hilbert space $H$,
then $\mathcal{H}_\alpha^\mu\,\cap\,\mathcal{K}(H) \,=\, \C1$.
\end{theorem}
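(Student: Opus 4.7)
The plan is a direct application of the Maximum Principle (Lemma~\ref{lemma}). Since that lemma concerns self-adjoint elements, I would first reduce to the self-adjoint case. Because $\Phi_\alpha^\mu$ is a Markov map and hence $*$-preserving, $\mathcal{H}_\alpha^\mu$ is closed under the involution $x \mapsto x^*$. Consequently, given $x \in \mathcal{H}_\alpha^\mu \cap \K(H)$, both $(x+x^*)/2$ and $(x-x^*)/(2i)$ again lie in $\mathcal{H}_\alpha^\mu \cap \K(H)$ and are self-adjoint, so it suffices to prove the conclusion for self-adjoint elements of this intersection.

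The key step is then to produce a normal positive functional on $N$ at which the norm of such a self-adjoint element is attained. For a self-adjoint compact operator $x$ on $H$, the spectral theorem for compact self-adjoint operators guarantees that at least one of $\pm\|x\|$ is an eigenvalue of $x$; pick a unit eigenvector $\xi \in H$ for this eigenvalue and set $\nu := \om_\xi = \langle\,\cdot\,\xi,\xi\rangle$. As a vector state on $B(H)$, $\nu$ is normal, and its restriction to $N$ defines an element of $N_{*,1}^+$. Choosing $\epsilon \in \{\pm 1\}$ so that $\langle \nu, \epsilon x\rangle = \|x\|$, the self-adjoint element $\epsilon x \in \mathcal{H}_\alpha^\mu$ attains its norm on $N_{*,1}^+$; Lemma~\ref{lemma} then forces $\epsilon x \in \C 1$, whence $x \in \C 1$.

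Combining the two paragraphs yields $\mathcal{H}_\alpha^\mu \cap \K(H) \subseteq \C 1$, which is exactly the theorem. Since the Maximum Principle has already been established, no real obstacle remains; the only additional ingredient is the spectral theorem, which guarantees that the norm of a compact self-adjoint operator is genuinely attained on a vector (hence normal) state rather than merely approached in the limit. When $H$ is infinite-dimensional, $1 \notin \K(H)$, so the inclusion actually reduces to $\mathcal{H}_\alpha^\mu \cap \K(H) = \{0\}$; when $H$ is finite-dimensional, the statement says that the only harmonic operators are the scalars, which is the nontrivial content in that case.
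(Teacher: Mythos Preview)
Your argument is correct and follows essentially the same route as the paper: reduce to self-adjoint $x$, use compactness to find a normal (vector) state on $B(H)$ at which $|\langle\nu,x\rangle|=\|x\|$, restrict to $N$, and invoke the Maximum Principle. The only differences are cosmetic---you spell out the spectral-theorem step and the sign adjustment $\epsilon x$ explicitly, whereas the paper simply asserts the existence of such a state and applies Lemma~\ref{lemma} directly.
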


\begin{proof}
Let $x\in \mathcal{H}_\alpha^\mu\cap\mathcal{K}(H)$ be self-adjonit. Then
%$x$ is diagonalizable and it can be
%wrriten as $x=\sum_i{\lambda_iP_i}$ in which $\lambda_i$ is the distinct nonzero eignvalue of $x$, and $P_i$ is
%the projection on eignspace $\ker(x-\lambda_i)$. We know that $\lVert x\rVert= \sup_i{\lvert\lambda_i\rvert}$ and
%$\lvert\lambda_i\rvert\rightarrow 0$ and hence there exists $\lambda_j$  such that $\lvert\lambda_j\rvert=\lVert x\rVert$.
%We can assume that $\lambda_j=\lVert x\rVert$. Consider a nonzero unit vector $\xi$ in range of $P_j$, then
%$\langle x\xi,\xi\rangle =\lambda_j\langle P_j\xi,\xi\rangle=\lVert x\rVert$.
%Therefore $x$ is a self-adjonit element in $N$ such that attains its norm on the positive linear functional $\omega_\xi$ in
%$N_{*,1}^+$ and it implies that $x\in \Bbb{C}1$.
there exists a state $\om\in B(H)_*$ such that $|\langle \omega , x \rangle| = \lVert x \rVert$. Hence the
restriction of $\omega$ to $N$ is a normal state on $N$ that realizes $\|x\|$.
Therefore $x\in \C1$ by Lemma \ref{lemma}.
Since $\mathcal{H}_\alpha^\mu\,\cap\,\mathcal{K}(H)$ is generated by its self-adjonit elements,
the result follows.
\end{proof}

\begin{corollary}
Let $\alpha : \G \curvearrowright N$ be an ergodic action of a locally compact quantum group $\g$ on a finite dimensional
von Neumann algebra $N$. Then $\h_\alpha^\mu = \C1$
for any non-degenerate spread-out $\mu\in\PG$.
\end{corollary}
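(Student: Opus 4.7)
The plan is to reduce this corollary directly to Theorem \ref{thm5}, since a finite dimensional von Neumann algebra is already ``all compact'' on any faithful representation. First I would observe that every finite dimensional von Neumann algebra is $\sigma$-finite (it admits a normal faithful state, e.g.\ the trace pulled back via the decomposition $N\cong\bigoplus_i M_{n_i}(\C)$), so the standing hypotheses of Theorem \ref{thm5} are satisfied.

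Next I would fix any faithful normal representation of $N$ on a finite dimensional Hilbert space $H$; the direct-sum-of-matrix-algebras structure gives such a representation on $H = \bigoplus_i \C^{n_i}$. Since $\dim H <\infty$, every bounded operator on $H$ is compact, so $B(H) = \mathcal{K}(H)$, and in particular $N \subseteq \mathcal{K}(H)$.

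Consequently $\mathcal{H}_\alpha^\mu \subseteq N \subseteq \mathcal{K}(H)$, so that
\[
\mathcal{H}_\alpha^\mu \,=\, \mathcal{H}_\alpha^\mu \cap \mathcal{K}(H)\,.
\]
Applying Theorem \ref{thm5} (whose hypotheses--ergodicity of $\alpha$, $\sigma$-finiteness of $N$, and $\mu$ non-degenerate spread-out--are all in force) yields $\mathcal{H}_\alpha^\mu \cap \mathcal{K}(H) = \C1$, and the corollary follows.

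There is no real obstacle: the corollary is essentially a restatement of Theorem \ref{thm5} in the case $N$ is finite dimensional. The only point worth checking is the innocuous one that we may always represent $N$ faithfully on a finite dimensional Hilbert space, which is immediate from the classification of finite dimensional von Neumann algebras.
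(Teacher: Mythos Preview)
Your argument is correct and is exactly the intended one: the paper states this corollary immediately after Theorem~\ref{thm5} without proof, and your observation that a finite dimensional $N$ can be faithfully represented on a finite dimensional $H$ (whence $N\subseteq\mathcal{K}(H)$ and $\mathcal{H}_\alpha^\mu=\mathcal{H}_\alpha^\mu\cap\mathcal{K}(H)$) is precisely the implicit reasoning.
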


In the case of the action of a locally compact (quantum) group $\G$
on itself via the comultiplication, there is always a natural
action of $\G$ on its Poisson boundaries. This follows from the fact that the comultiplication can be regarded both
as a left or a right action of $\G$ on itself that commute with each other. As shown
by the following theorem, in the general setting of quantum group actions, where such two sided commuting actions
do not exist, we cannot define a natural action of $\G$ on the corresponding Poisson boundaries.
%This is no longer the case in the general setting.
%The reason is that $\G$ has a bi-action on itself, i.e., a left and a right action
%that commute.

\begin{theorem}\label{act}
Let $\alpha$ be an ergodic action of a locally compact quantum group $\G$ on a von Neumann algebra $N$,
and let $\mu\in\PG$ be non-degenerate and spread-out.
If $(\om\otimes\id)\alpha(x) \in \h_\alpha^\mu$ for every $\om\in\LO$, then $\h_\alpha^\mu = \C1$.
\end{theorem}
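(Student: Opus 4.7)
The plan is to reduce the theorem to the special case $\alpha=\Gamma$ of the coaction of $\G$ on itself, and then dispatch the reduced case using the refined structural identity furnished by the hypothesis together with the Maximum Principle (Lemma \ref{lemma}).

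\textbf{Reduction.} Fix $x\in\h_\alpha^\mu$ and let $\nu\in N_*^+$ be any nonzero normal state. Set $y:=\phi_\nu(x)=(\id\otimes\nu)\alpha(x)\in\LL$. By Proposition \ref{thm1}, $y\in\h_\Gamma^\mu$. The intertwining identity (\ref{a1}) together with the second formula in (\ref{new2}) gives, for every $\om\in\LO$,
\[
\phi_\nu(\Phi_\alpha^\om(x))=(\id\otimes\nu)(\Phi^\om\otimes\id)\alpha(x)=\Phi^\om(\phi_\nu(x))=\Phi^\om(y).
\]
Since the theorem's hypothesis forces $\Phi_\alpha^\om(x)\in\h_\alpha^\mu$, a second application of Proposition \ref{thm1} shows that $\Phi^\om(y)\in\h_\Gamma^\mu$ for every $\om\in\LO$. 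Thus $y$ satisfies the analogue of the hypothesis for the coaction $\Gamma$ of $\G$ on itself.

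\textbf{The base case $\alpha=\Gamma$.} For the coaction of $\G$ on $\LL$, the hypothesis (applied now in $\LL$) combined with Proposition \ref{thm1} and Ruan's intersection formula for Fubini tensor products yields the refined inclusion $\Gamma(\h_\Gamma^\mu)\subseteq\h_\Gamma^\mu\vtp\h_\Gamma^\mu$. Thus $\Gamma|_{\h_\Gamma^\mu}$ is an ergodic coaction of $\G$ on the Choi--Effros von Neumann algebra $\h_\Gamma^\mu$ whose $\mu$-Markov operator equals the identity. Applying the Maximum Principle (Lemma \ref{lemma}) to this restricted coaction --- where the extra structure allows one to translate norm-attainment on normal states via the faithfulness assertion of Theorem \ref{lem} --- I would conclude $y\in\C 1$.

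\textbf{Conclusion and main obstacle.} Since $\phi_\nu(x)\in\C 1$ for every normal state $\nu\in N_*^+$, a standard slicing argument (using that $N_*$ separates points in $\LL\vtp N$) produces $b\in N$ with $\alpha(x)=1\otimes b$. Coassociativity of $\alpha$ then gives $\alpha(b)=1\otimes b$, so $b\in N^\alpha=\C 1$ by the ergodicity of $\alpha$, and injectivity of $\alpha$ forces $x\in\C 1$. The delicate step is the base case $\alpha=\Gamma$: applying Lemma \ref{lemma} to $\Gamma|_{\h_\Gamma^\mu}$ requires handling the fact that not every self-adjoint element of $\LL$ attains its norm on a normal state; the refined identity $\Gamma(\h_\Gamma^\mu)\subseteq\h_\Gamma^\mu\vtp\h_\Gamma^\mu$ together with the faithfulness of the slice maps $\phi_\om$ is what I expect to make the reduction to Lemma \ref{lemma} go through.
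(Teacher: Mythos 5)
Your overall architecture is workable, and the final slicing/coassociativity step of your ``Conclusion'' is fine, but the base case is a genuine gap, and you have in fact already written down the observation that closes it without using it. Having established $\Gamma(\h_\Gamma^\mu)\subseteq \h_\Gamma^\mu\vtp\h_\Gamma^\mu$, you correctly note that the restriction $\beta:=\Gamma|_{\h_\Gamma^\mu}$ is an ergodic action of $\G$ on the Choi--Effros von Neumann algebra $\h_\Gamma^\mu$ \emph{whose $\mu$-Markov operator is the identity}. That is the whole point: since $\Phi_\beta^\mu=\mathrm{id}$, every element of $\h_\Gamma^\mu$ is $\beta$-harmonic, so $\h_\beta^\mu=\h_\Gamma^\mu$ and the idempotent $E_\beta^\mu$ of $(\ref{new1})$ is the identity map, which is trivially normal. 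Corollary \ref{cor1} (equivalently Proposition \ref{thm3}: every normal state on $\h_\beta^\mu$ extends to a normal state on the ambient algebra of $\beta$, which is $\h_\Gamma^\mu$ itself) then yields $\h_\Gamma^\mu=\h_\beta^\mu=\C1$. Your worry about self-adjoint elements of $\LL$ not attaining their norm on normal states is a symptom of applying the Maximum Principle in the wrong ambient algebra: the Maximum Principle should be invoked for the action $\beta$ on $\h_\Gamma^\mu$, where any projection automatically attains its norm at a normal state of $\h_\Gamma^\mu$; no appeal to norm-attainment inside $\LL$, nor to the faithfulness statement of Theorem \ref{lem} for $\Gamma$, is needed. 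As written, the sentence ``I would conclude $y\in\C1$'' is not justified, and you concede as much in your last paragraph.

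Two further remarks on the comparison with the paper's proof. First, your reduction to the case $\alpha=\Gamma$ is an unnecessary detour: the identical restriction argument applies verbatim to $\alpha$ itself. The hypothesis gives $\alpha(x)\in\LL\vtp\h_\alpha^\mu$ for $x\in\h_\alpha^\mu$, so $\alpha$ restricts to an ergodic action $\beta:\G\curvearrowright\h_\alpha^\mu$ with $\Phi_\beta^\mu=\Phi_\alpha^\mu|_{\h_\alpha^\mu}=\mathrm{id}$, and Corollary \ref{cor1} finishes the proof in one line --- this is exactly what the paper does. Whatever difficulty you face for $\Gamma$ you would face for general $\alpha$, and the same resolution handles both, so nothing is gained by the reduction. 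Second, your reduction quietly uses Proposition \ref{thm1} in the direction $x\in\h_\alpha^\mu\Rightarrow\alpha(x)\in\h_\Gamma^\mu\vtp N$ together with the intersection formula $(\h_\Gamma^\mu\vtp\LL)\cap(\LL\vtp\h_\Gamma^\mu)=\h_\Gamma^\mu\vtp\h_\Gamma^\mu$; this is correct but, again, not needed once the restriction argument is run directly on $\alpha$.
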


\begin{proof}
As in the proof of Proposition \ref{thm1} we conclude that
$\alpha(x) \in \LL\otimes_{\mathcal{F}} \h_\alpha^\mu = \LL\vtp \h_\alpha^\mu$,
for all $x\in\h_\alpha^\mu$. Hence the restriction of $\alpha$ to $\h_\alpha^\mu$
induces an ergodic action $\beta: \G \curvearrowright \h_\alpha^\mu$.
Therefore $\Phi_\beta^\mu = \Phi_\alpha^\mu \big|_{\h_\alpha^\mu}$,
which implies that $\h_\beta^\mu = \h_\alpha^\mu$. Hence $\h_\alpha^\mu = \C1$, by Corollary \ref{cor1}.
\end{proof}

%The following two corollaries are above result has the following interesting consequences.

\begin{corollary}
Let $\G$ be a locally compact quantum group. If there exists a non-degenerate spread-out $\mu\in\PG$
in the center of the  Banach algebra $\MG$, then $\h_\alpha^\mu = \C1$, and $\G$ is amenable.
\end{corollary}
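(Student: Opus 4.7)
My plan is to deduce both conclusions from a single observation: centrality of $\mu$ in $\MG$ forces $\Phi^\mu$ (and $\Phi_\alpha^\mu$) to commute with every convolution operator, and this commutativity is inherited by the Ces\`aro projection onto the harmonic space.

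For the first assertion, I fix an ergodic action $\alpha : \G \curvearrowright N$ on a $\sigma$-finite von Neumann algebra and aim to invoke Theorem \ref{act}. Using the intertwining relation (\ref{a1}) together with coassociativity of $\Gamma$, one derives the composition formula $\Phi_\alpha^\mu \Phi_\alpha^\nu = \Phi_\alpha^{\nu \star \mu}$ for all $\nu \in \MG$ (obtained by applying $\alpha$ to both sides, invoking (\ref{a1}) twice, and using injectivity of $\alpha$). Since $\mu$ is central and $\LO$ embeds in $\MG$ as a two-sided ideal, this gives $\Phi_\alpha^\mu \Phi_\alpha^\om = \Phi_\alpha^\om \Phi_\alpha^\mu$ for every $\om \in \LO$. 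For $x \in \h_\alpha^\mu$, it then follows that
\[
\Phi_\alpha^\mu\bigl(\Phi_\alpha^\om(x)\bigr) \,=\, \Phi_\alpha^\om\bigl(\Phi_\alpha^\mu(x)\bigr) \,=\, \Phi_\alpha^\om(x),
\]
so $(\om \otimes \iota)\alpha(x) = \Phi_\alpha^\om(x) \in \h_\alpha^\mu$, and Theorem \ref{act} delivers $\h_\alpha^\mu = \C 1$.

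For the amenability of $\G$, I apply the first part to the canonical coaction $\alpha = \Gamma$ of $\G$ on $\LL$, which is ergodic. This gives $\h_\Gamma^\mu = \C 1$, so the completely positive idempotent $E_\Gamma^\mu$ from \eqref{new1} has range $\C 1$ and takes the form $E_\Gamma^\mu(x) = m(x)\,1$ for a state $m \in \LL^*$. The normality of $\Phi^\nu$ for $\nu \in \LO$, combined with the weak$^*$ Ces\`aro ultralimit defining $E_\Gamma^\mu$ and the commutation $\Phi^\nu \Phi^\mu = \Phi^\mu \Phi^\nu$ just established, now yields $E_\Gamma^\mu \Phi^\nu = \Phi^\nu E_\Gamma^\mu$. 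Evaluating both sides at $x$ and using $\Phi^\nu(1) = \nu(1)\,1$, I obtain $m(\Phi^\nu(x)) = \nu(1)\, m(x)$, so $m$ is a left invariant mean on $\LL$, which is exactly the amenability of $\G$.

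The main obstacle is modest and largely bookkeeping: one must track the order of convolutions carefully (the identity $\Phi^\mu \Phi^\nu = \Phi^{\nu \star \mu}$ reverses the product, so centrality is genuinely needed to produce commuting operators), and one must verify that this commutation transfers through the Ces\`aro ultralimit defining $E_\Gamma^\mu$, which is automatic from the weak$^*$ continuity of $\Phi^\nu$.
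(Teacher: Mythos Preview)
Your argument for $\h_\alpha^\mu = \C 1$ is essentially identical to the paper's: both use centrality of $\mu$ to obtain $\Phi_\alpha^{\om\star\mu}=\Phi_\alpha^{\mu\star\om}$, deduce that $(\om\otimes\id)\alpha(x)\in\h_\alpha^\mu$ for all $\om\in\LO$ and $x\in\h_\alpha^\mu$, and then invoke Theorem~\ref{act}.

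For amenability the routes diverge. The paper simply cites \cite[Theorem~4.2]{KNR} and is done; you instead supply a direct construction of an invariant mean by showing that centrality forces $E_\Gamma^\mu$ to commute with every $\Phi^\nu$, so that the state $m$ defined by $E_\Gamma^\mu(x)=m(x)1$ satisfies $m\circ\Phi^\nu=\nu(1)\,m$. This is correct and self-contained, and essentially reproves the relevant direction of the cited result. The trade-off is that the paper's appeal to \cite{KNR} avoids re-deriving a known fact, while your argument makes the proof independent of that reference and makes transparent exactly how centrality yields invariance. One small wrinkle: you phrased your ``first part'' under a $\sigma$-finiteness hypothesis on $N$, but $\LL$ need not be $\sigma$-finite; since Theorem~\ref{act} as stated carries no such hypothesis you may still invoke it for $\alpha=\Gamma$, but strictly speaking your own formulation does not cover that case, so you should either drop the $\sigma$-finiteness assumption from the first part or appeal directly to Theorem~\ref{act} when treating $\Gamma$.
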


\begin{proof}
If $\mu$ is in the center of $\MG$, then for every $x\in\h_\alpha^\mu$ and $\om\in\LO$,
\[
(\om\otimes\id)\,\alpha(x) \,=\, \Phi_\alpha^\om (x) \,=\, \Phi_\alpha^\om\, \Phi_\alpha^\mu(x) \,=\,
\Phi_\alpha^{\om\star\mu}(x)\,=\, \Phi_\alpha^{\mu\star\om}(x)\,=\,\Phi_\alpha^\mu\, \Phi_\alpha^\om(x) \,=\,
%(\om\otimes\id)\,\alpha(\Phi_\alpha^\mu(x)) \,=\,
%(\om\star\mu\otimes\id)\,\alpha(x) \,=\, (\mu\star\om\otimes\id)\,\alpha(x) \,=\,
\Phi_\alpha^\mu\big((\om\otimes\id)\,\alpha(x)\big)% \,=\, \Phi^\mu\big(\alpha(\Phi_\alpha^\om(x)\big), \,=\,
\]
which implies that $(\om\otimes\id)\alpha(x) \in \h_\alpha^\mu$,
for all $\om\in\LO$, hence $\h_\alpha^\mu = \C1$, by Theorem \ref{act}.
Also $\G$ is amenable by \cite[Theorem 4.2]{KNR}.
%
%This follows from Theorem \label{act} and the fact that
%$\Phi_\alpha^\mu \big((\om\otimes\id) \alpha\big) = (\mu\star\om \otimes \id) \alpha$.
\end{proof}

The next corollary can be regarded as a generalization of
\cite [Theorem 4.3]{Kai-Ver}, \cite[Theorem 1.10]{Ros}, and \cite[Theorem 4.2]{KNR}.

\begin{corollary}
A locally compact quantum group $\G$ is amenable if and only if there exist a non-degenerate
spread-out $\mu\in\PG$ and $\nu\in\MG$ such that the spaces of fixed points of the left convolution
map by $\mu$ and the right convolution map by $\nu$ on $\LL$ coincide.
\end{corollary}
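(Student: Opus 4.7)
My plan is to reduce both directions to \cite[Theorem 4.2]{KNR} by applying Theorem \ref{act} to the comultiplication, viewed as a left action $\Gamma:\G\curvearrowright\LL$. This action is ergodic since $\{x\in\LL : \Gamma(x)=1\otimes x\} = \C 1$, a standard fact for locally compact quantum groups. Throughout, write $\Phi^\mu(x) = (\mu\otimes\iota)\Gamma(x)$ and $\Psi^\nu(x) = (\iota\otimes\nu)\Gamma(x)$ for the left and right convolution maps on $\LL$, with respective fixed-point spaces $\h_\Gamma^\mu$ and $R_\nu := \{x\in\LL : \Psi^\nu(x) = x\}$. The central observation is the commutation identity
\[
\Phi^\omega \circ \Psi^\nu \,=\, \Psi^\nu \circ \Phi^\omega \qquad (\omega \in \LO,\ \nu \in \MG),
\]
which is immediate from coassociativity $(\Gamma\otimes\iota)\Gamma = (\iota\otimes\Gamma)\Gamma$.

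For the ``if'' direction, suppose non-degenerate spread-out $\mu\in\PG$ and $\nu\in\MG$ satisfy $\h_\Gamma^\mu = R_\nu$. Given any $x$ in this common fixed-point space and $\omega\in\LO$, the commutation above combined with $\Psi^\nu(x) = x$ yields $\Psi^\nu(\Phi^\omega(x)) = \Phi^\omega(\Psi^\nu(x)) = \Phi^\omega(x)$. Hence $(\omega\otimes\iota)\Gamma(x) = \Phi^\omega(x) \in R_\nu = \h_\Gamma^\mu$. Theorem \ref{act} then forces $\h_\Gamma^\mu = \C 1$, and amenability of $\G$ follows from \cite[Theorem 4.2]{KNR}.

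Conversely, assume $\G$ is amenable. By \cite[Theorem 4.2]{KNR} there exists a non-degenerate spread-out $\mu\in\PG$ with $\h_\Gamma^\mu = \C 1$. Since amenability passes to the opposite quantum group $\G^{\mathrm{op}}$, whose comultiplication is the flipped $\Sigma\Gamma$ and whose left convolution maps are precisely the right convolution maps on $\G$, applying the same theorem to $\G^{\mathrm{op}}$ produces a non-degenerate spread-out $\nu\in\PG$ with $R_\nu = \C 1$. Both fixed-point spaces therefore equal $\C 1$ and trivially coincide.

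The step I expect to require the most care is the ``only if'' direction: one must verify that \cite[Theorem 4.2]{KNR} is genuinely an iff statement producing a measure of the required type, and that being non-degenerate and spread-out transfers symmetrically under passage to $\G^{\mathrm{op}}$. The ``if'' direction is essentially bookkeeping once the coassociativity-based commutation of $\Phi^\omega$ with $\Psi^\nu$ is recorded, since it immediately delivers the invariance of $\h_\Gamma^\mu$ under every $\Phi^\omega$, which is exactly the hypothesis of Theorem \ref{act}.
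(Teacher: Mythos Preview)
Your argument is correct and is exactly the route the paper intends: the ``if'' direction is a direct application of Theorem~\ref{act} to $\alpha=\Gamma$, using the coassociativity identity $\Phi^\omega\Psi^\nu=\Psi^\nu\Phi^\omega$ to see that $\h_\Gamma^\mu=R_\nu$ is invariant under every $\Phi^\omega$, and amenability then comes from \cite[Theorem~4.2]{KNR}. For the ``only if'' direction your worry is unnecessary: \cite[Theorem~4.2]{KNR} produces a non-degenerate $\mu\in\LO$ (hence automatically spread-out, with $n=1$ and $\omega=\mu$), and your passage to $\G^{\mathrm{op}}$ to obtain $\nu$ with $R_\nu=\C1$ is legitimate and standard.
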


As another immediate corollary to Theorem \ref{act}, we obtain one of
the main results of \cite{Chu-Lau}, a {\it dual Choquet--Deny theorem}:
if $\G$ is a co-commutative locally compact quantum groups, then $\h_\Gamma^\mu = \C1$
for every non-degenerate $\mu \in \PG$.

Next we restrict ourselves to the actions of ``Compact Type".
A quantum version of the Choquet--Deny theorem for compact groups is proved in \cite[Theorem 5.3]{KNR}.
In the following, we generalize this result to the case of quantum ergodic actions.

Translating to our setting, the Choquet--Deny theorem for compact (quantum) groups
states the triviality of harmonic functions associated to the action
of a compact (quantum) group on itself, which is a ``\emph{compact}'' (quantum) space.
%, then $\h_\alpha^\mu = \C1$ for every non-degenerate (quantum) probability measure $\mu$ on the (quantum) group.
We show that weaker compactness type conditions are enough for the triviality of $\h_\alpha^\mu$.
First note that by \cite[Theorem 5.3]{KNR} and Corollary \ref{cor},
if $\G$ is compact and $\alpha : \G \curvearrowright N$ is ergodic,
then $\h_\alpha^\mu = \C 1$ for all non-degenerate $\mu\in\PG$.

A state $\varOmega$ on $N$ is said to be $\G$-invariant if $(\om \otimes\varOmega) \alpha = \langle \om\,,\,1\rangle\, \varOmega$
for all $\om\in\LO$.
Note that for a normal state $\nu\in N_*$, the $\G$-invariance means
$\om\phi_\nu = (\Phi_\alpha^\om)_*(\nu) = \langle \om\,,\,1\rangle\, \nu$
for all $\om\in\LO$.

%The condition of compactness can be translated as existence of normal
%$\alpha$-invariant mean on von Neumann algebra. In the next theorem this
%condition has been considered on von Neumann algebra, instead of compactness of quantum group.
\begin{theorem}\label{thm7}
Let $\alpha$ be an ergodic action of a locally compact quantum group $\g$ on a $\sigma$-finite
von Neumann algebra $N$. If there exists a normal $\G$-invariant state on $N$, then
$\h_\alpha^\mu = \C1$ for all non-degenerate spread-out $\mu\in\PG$.
\end{theorem}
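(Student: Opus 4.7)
The plan is to leverage the $\G$-invariant state $\varOmega$ to first show that $\varOmega$ is automatically faithful, then combine this with a Kadison--Schwarz inequality to conclude that $x^{*}x,\,xx^{*}\in\h_\alpha^\mu$ for every $x\in\h_\alpha^\mu$, after which Proposition \ref{thm4} immediately finishes the argument.

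First I would unpack the definition of $\G$-invariance. The condition $(\om\otimes\varOmega)\alpha(x)=\langle\om,1\rangle\varOmega(x)$ for all $\om\in\LO$ and $x\in N$ is, by weak$^{*}$-density, equivalent to $\phi_\varOmega(x)=(\id\otimes\varOmega)\alpha(x)=\varOmega(x)\cdot 1$ for every $x\in N$. Since $\alpha$ is ergodic and $\varOmega\in N_{*}^{+}$ is non-zero, Theorem \ref{lem} asserts that $\phi_\varOmega$ is faithful. Hence if $x\in N^{+}$ satisfies $\varOmega(x)=0$, then $\phi_\varOmega(x)=0$ and consequently $x=0$, so $\varOmega$ is a normal \emph{faithful} state on $N$. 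A short calculation (slice the intertwining relation $\alpha\Phi_\alpha^\mu=(\Phi^\mu\otimes\id)\alpha$ on the right by $\varOmega$ and use unitality of $\Phi^\mu$ together with the identity $\phi_\varOmega=\varOmega(\cdot)1$) yields $\varOmega\circ\Phi_\alpha^\mu=\varOmega$, so $\varOmega$ is also $\Phi_\alpha^\mu$-invariant.

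Now let $x\in\h_\alpha^\mu$. Since $\Phi_\alpha^\mu$ is a Markov map it satisfies Kadison--Schwarz, so $\Phi_\alpha^\mu(x^{*}x)\ge\Phi_\alpha^\mu(x)^{*}\Phi_\alpha^\mu(x)=x^{*}x$. Evaluating the $\Phi_\alpha^\mu$-invariant state $\varOmega$ on both sides gives $\varOmega\bigl(\Phi_\alpha^\mu(x^{*}x)-x^{*}x\bigr)=0$, and the faithfulness established above forces $\Phi_\alpha^\mu(x^{*}x)=x^{*}x$, i.e.\ $x^{*}x\in\h_\alpha^\mu$. Replacing $x$ by $x^{*}$ yields $xx^{*}\in\h_\alpha^\mu$. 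Proposition \ref{thm4} then delivers $x\in\C1$, and therefore $\h_\alpha^\mu=\C1$.

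The crux of the argument — and what I expect to be the main subtle point — is the automatic upgrade from normality to faithfulness of $\varOmega$: this is where Theorem \ref{lem} does the heavy lifting, allowing us to avoid imposing an \emph{a priori} faithfulness hypothesis on the invariant state (as is typically required in Choquet--Deny-type results). Once faithfulness is in hand, the remainder is a clean application of Kadison--Schwarz plus the previously established multiplicative rigidity of $\h_\alpha^\mu$.
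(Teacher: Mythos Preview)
Your proof is correct. Both your argument and the paper's begin identically, using Theorem~\ref{lem} to upgrade the normal $\G$-invariant state $\varOmega$ to a faithful one (and both observe, at least implicitly, that $\varOmega\circ\Phi_\alpha^\mu=\varOmega$). After that point the two diverge.

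The paper argues that, since $\varOmega$ is faithful, normal, and $\Phi_\alpha^\mu$-invariant, it is preserved by the Ces\`aro idempotent $E_\alpha^\mu$ from~(\ref{new1}); a standard monotone-convergence argument then forces $E_\alpha^\mu$ to be \emph{normal}, and Corollary~\ref{cor1} finishes. You instead use Kadison--Schwarz to show that $\Phi_\alpha^\mu(x^*x)-x^*x\ge 0$ has $\varOmega$-value zero, hence vanishes by faithfulness, giving $x^*x,xx^*\in\h_\alpha^\mu$; Proposition~\ref{thm4} then yields $x\in\C1$. Both routes ultimately rest on the Maximum Principle (Lemma~\ref{lemma}), but through different intermediate results. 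Your approach is more direct and avoids discussing normality of $E_\alpha^\mu$; the paper's approach has the merit of isolating the structurally interesting fact that the Choi--Effros projection is normal under this hypothesis, which is of independent interest.
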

\begin{proof}
Suppose that $\nu_0\in N_*$ is a normal $\G$-invariant state on $N$.
Further suppose that $\om$ is a faithful normal state on $\LL$.
Then it follows from Theorem \ref{lem} that
$\nu_0 \,=\, \om\,\phi_{\nu_0}$
is faithful on $N$. Now, if $\mu\in\PG$ is non-degenerate, it follows from (\ref{new1}) that $\nu_0\, E_\alpha^\mu \,=\, \nu_0$.
Hence, the completely positive map $E_\alpha^\mu$ is normal, and therefore $\h_\alpha^\mu = \C1$,
by Corollary \ref{cor1}.
\end{proof}

As a consequence, Theorem \ref{thm7} implies that harmonic
function are trivial in the case of probability measure preserving actions of locally compact groups.

We finish with another immediate corollary of Theorem \ref{thm7}, in the setting of group actions on finite factors.
\begin{corollary}
 Let $\alpha : G \curvearrowright N$ be an ergodic action of a locally
 compact group $G$ on a finite factor $N$, and $\mu$ be a non-degenerate
 probability measure on $G$. If $x\in N$ is such that
 \[
  \int_G\, \alpha_g\,(x)\, d\mu(g) \,=\, x\,,
 \]
 then $x\in\C 1$.
\end{corollary}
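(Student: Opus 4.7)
The strategy is to deduce this from Theorem \ref{thm7}: the fixed-point hypothesis $\int_G \alpha_g(x)\,d\mu(g) = x$ says exactly that $\Phi_\alpha^\mu(x) = x$, i.e., $x\in\h_\alpha^\mu$, so it suffices to show $\h_\alpha^\mu = \C1$, and Theorem \ref{thm7} delivers this once one produces a normal $\G$-invariant state on $N$ (and checks $\sigma$-finiteness and the spread-out clause).

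My plan for the invariant state is to use the unique faithful normal tracial state $\tau$ of the finite factor $N$. For each $g\in G$, the functional $\tau\circ\alpha_g$ is again a normal tracial state (as $\alpha_g$ is a $*$-automorphism), so uniqueness of the trace on a finite factor forces $\tau\circ\alpha_g = \tau$ for every $g\in G$. A Fubini calculation, using that the classical form of the action is $\alpha(x)(g) = \alpha_g(x)$, translates this pointwise invariance into the quantum-group form
\[
(\om\otimes\tau)\,\alpha(x) \,=\, \int_G\, \om(g)\,\tau(\alpha_g(x))\, dg \,=\, \langle\om,1\rangle\,\tau(x) \qquad (\om\in\LO,\, x\in N),
\]
which is exactly the $\G$-invariance condition used in the paper. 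Faithfulness of $\tau$ also makes $N$ automatically $\sigma$-finite.

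Finally, Theorem \ref{thm7} is stated for spread-out $\mu$, which is not assumed in the corollary; I would handle this by appealing to the remark just before Lemma \ref{lemma}, which observes that the spread-out restriction can be dropped whenever $\G$ is co-amenable, and in particular for every classical locally compact group. Combining these three checks with Theorem \ref{thm7} yields $\h_\alpha^\mu = \C1$, and hence $x\in \C1$. I expect no real obstacle; the only step requiring any care is the Fubini translation between the classical trace-invariance $\tau\circ\alpha_g=\tau$ and the paper's quantum-group $\G$-invariance, after which the statement is a direct application of Theorem \ref{thm7}.
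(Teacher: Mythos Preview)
Your proposal is correct and follows essentially the same approach as the paper: the paper's proof is the single line ``one only needs to observe that the unique finite faithful trace $\tau$ on $N$ is $G$-invariant,'' after which Theorem~\ref{thm7} applies. Your write-up is in fact more careful than the paper's, since you explicitly address $\sigma$-finiteness and the spread-out hypothesis via the co-amenability remark preceding Lemma~\ref{lemma}, both of which the paper leaves implicit.
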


\begin{proof}
 One only needs to observe that the unique finite faithful trace $\tau$ on $N$
 is $G$-invariant.
 \end{proof}

\end{document}